\newcounter{eqnsave}
\def\C{{{{\rm {\mbox{\small l}}} \kern -.50em {\rm C}}}}
\def\I{{{{\rm l} \kern -.10em {\rm I}}}}
\def\R{{{{\rm l} \kern -.15em {\rm R}}}}
\def\N{{{{\rm l} \kern -.15em {\rm N}}}}
\def\E{{{{\rm l} \kern -.15em {\rm E}}}}
\newcommand{\bea}{\begin{eqnarray}}
\newcommand{\beas}{\begin{eqnarray*}}
\newcommand{\ba}{\begin{array}}
\newcommand{\ea}{\end{array}}
\newtheorem{theorem}{Theorem}[section]
\newtheorem{lemma}[theorem]{Lemma}
\newtheorem{remark}[theorem]{Remark}
\newtheorem{definition}{Definition}[section]
\newcommand{\eea}{\end{eqnarray}}                %
\newcommand{\bas}{\begin{eqnarray*}}            %
\newcommand{\eas}{\end{eqnarray*}}              %
\begin{document}
\title{On a comparison method for a parabolic-elliptic system of  chemotaxis  with density-suppressed motility and  logistic growth
}
 \author{ J.Ignacio Tello }
  \date{}
  
\maketitle
\begin{abstract}
We consider a parabolic-elliptic  system of partial differential equations  with  chemotaxis and logistic growth given by the system \newline 
$$ \left\{
\begin{array}{l} u_t -\Delta (u \gamma(v)= \mu u(1-u),   
\\ - \Delta v +v=u,  
\end{array} \right.
$$
 under Neumann boundary conditions and appropriate initial  data in a bounded and regular domain $\Omega$ of $\R^N$ (for $N \geq 1)$, where  $\gamma \in C^3([0, \infty))$ and satisfies
  \newline 
  
$\gamma (s) > 0$,    $\gamma^{\prime}(s) \leq 0$, $\gamma^{\prime \prime} (s) \geq 0$, $\gamma^{\prime \prime \prime}(s) \leq 0$  for any $s \geq 0$
$$-2 \gamma^{\prime}(s) + \gamma^{\prime \prime}(s)s \leq \mu_0< \mu$$
$$\frac{[\gamma^{\prime}(s)]^2}{\gamma(s)} \leq c, \quad  \mbox{ for any } s \in [0, \infty).
$$
We obtain the global existence and uniqueness of bounded in time solutions and the following asymptotic behavior \newline 
$$\|u- 1\|_{L^{\infty}(\Omega)} +\|v- 1\|_{L^{\infty}(\Omega)} \rightarrow 0, \quad \mbox{ when } t \rightarrow +\infty.$$
\end{abstract}



 \footnotetext[2]{ Departamento de Matem\'{a}ticas Fundamentales, Facultad de Ciencias, 
Universidad Nacional de Educaci\'on a Distancia, 28040 Madrid. Spain}

\section{Introduction}\label{s1}

Chemotaxis has been studied from a mathematical point of view in the last decades, especially significant are the  pionnering works of Keller and Segel   \cite{KS} and \cite{KS2} 
 modeling the phenomenon.
 The model presented in  \cite{KS} and \cite{KS2}
is a fully parabolic system of two equations involving   a chemical stimuli and a biological especies orientating its movement  in response to the  mentioned    stimuli.  

 There exists an  extensive mathematical literature studying chemotaxis  systems of Partial Differential Equations, see for instance the surveys   
   Horstmann \cite{horstmann}, \cite{horstmann2011},  Bellomo et al \cite{bellomo1}, Hillen and Painter \cite{hp}    and references therein
for more details. 
One of the main topics studied in the literature is under which assumptions the solution  blows up   or  it remains  bounded for any $t<{\infty}$. Such a dichotomy is also present in 
 chemotaxis systems with logistic growth terms. For instance,  for bounded and regular domains in $\R^N$,   the solution to the parabolic elliptic system 
$$\left\{ \begin{array}{l} u_t -\Delta u=- div(u \chi \nabla v)+  \mu u(1-u),  
\\ - \Delta v +v=u, 
\end{array} \right.
$$  under homogeneous Newmann boundary conditions,
globally exists and it is bounded in time when $$\chi <\frac{N-2}{N} \mu, \quad \mbox{ if }  N >2 \quad \mbox{ and } \mu>0, \ \mbox{  if } N=2$$ 
see Tello and Winkler \cite{tw1}. In Khan and Stevens \cite{khan},   the global existence of solutions is obtained for the limit case  $\chi =\frac{N-2}{N} \mu$.
 The result is also valid if the domain is    $\R^N$ and  $\chi= \mu,$  see 
 Salako and Shen
\cite{salako}.   Galakhov, Tello and Salieva   \cite{gst} study the system 
$$\left\{ \begin{array}{l} u_t -\Delta u=- div(u^m \chi \nabla v) + \mu u(1-u^{\kappa}),  
\\ - \Delta v +v=u^{\theta} ,  
\end{array} \right.
$$
in a bounded and regular domain with homogeneous Neumann boundary conditions. In \cite{gst}, the authors obtain that, under any of the  assumptions 
\begin{itemize}
\item[] $\theta>m+ \gamma-1,$
\item[]  $\theta =m+ \gamma -1 $  and $\mu > \frac{N\theta -2}{2(m-1)+N \theta}\chi,$
\end{itemize}
the solution exists globally in time for regular initial data. Moreover, if  
$$\theta \geq m+ \gamma -1 \mbox{   and } \quad  \mu> 2\chi ,$$ 
 and  the initial data satisfy  
 $ 0< \underline{u}_0 \leq 
 u_0\leq \overline{u}_0 < \infty$,  for some positive constants $\overline{u}_0$ and $\underline{u}_0$ 
we have the following asymptotic behavior 
$$\|u-1\|_{L^{\infty}(\Omega)}+ \|w-1\|_{L^{\infty}(\Omega)}\rightarrow 0 \quad \mbox{ as } t \rightarrow \infty.$$

Recently, in Liu et al \cite{liu},  the authors propose a fully parabolic system of two parabolic equations 
to model the pattern formation of e-coli bacteria (see also Fu et al \cite{fu}). In \cite{liu}, $u$ denotes the e-coli density and $v$ the molecule acyl-homoserine lactone concentration (AHL)  which is excreted  by the e-coly cells. The system proposed in \cite{liu}
reads as follows  
$$ \left\{ \begin{array}{l}  \displaystyle 
u_t - \Delta( \tilde{\gamma}(v) u) = \mu u (1-  u / \rho_s),  
\\ [2mm]  \displaystyle 
v_t - D_h\Delta v+ \alpha v  = \beta  u 
 \end{array} \right.
$$
 where $\tilde{\gamma}$ is given by a bounded  function 
 $$\tilde{\gamma} (v)= \frac{D_{\rho}+D_{\rho,0}  \frac{v^m}{K_h^m} }{1+\frac{v^m}{K_h^m}}
$$ for $m=20$ 
and  positive constants   $D_{\rho}$,   $D_{\rho,0}$,   $K_h$, 
$\mu $, $\rho_s$,
$D_h$,  
$\alpha$ and $\beta$. 
\newline 
Notice that the parabolic equation satisfied by $u$ in  \cite{liu}   can be written as  a particular case of  the classical Keller-Segel system  
$$ u_t - div( \tilde{\gamma}(v)\nabla u) = div(u \tilde{\gamma}^{\prime}(v)\nabla v) + \mu u (1- u / \rho_s),   $$
where $ \tilde{\gamma}$ represents the diffusion  coefficient and $\tilde{\gamma}^{\prime}(v)$ is the chemoatractant coefficient which are 
 clearly linked. 

In \cite{liu}, the diffusion coefficient of $v$ is taken $D_h\sim 400 \mu m^2 s^{-1}$ and $D_{\rho,0} / D_{\rho} <<1$. Considering such a range of values for $D_h$ and $D_{\rho,0} / D_{\rho}$ and the range of   data for $v$,  a natural simplification of the fully parabolic  system proposed in \cite{liu}  is to take $$\tilde{\gamma}(v)= \frac{D_{\rho}  }{1+\frac{v^m}{K_h^m}}$$ for $v$  satisfying the elliptic equation 
$$ -D_h \Delta v+ \alpha  v  =  \beta u .$$
Such simplification 
 transforms  the   fully parabolic
 problem into  a parabolic-elliptic system. Now we introduce the   rescaled variables and parameters 
$$ \tilde{u}= \frac{u}{\rho_{s}}, \quad \tilde{v}= \alpha v, \quad \tilde{x}= \frac{\alpha^{\frac{1}{2}}}{D_h^{\frac{1}{2}}}x, \quad \tilde{ \beta}= \rho_s \frac{\beta}{\alpha}   $$
 and the system becomes 
$$ \left\{ \begin{array}{l}  \displaystyle 
u_t - \Delta( \gamma(\tilde{v}) \tilde{u}) = \mu \tilde{u} (1- \tilde{u}),  
\\ [2mm] 
-\Delta \tilde{v}+  \tilde{v}  = \tilde{\beta } \tilde{u} . 
 \end{array} \right.
$$
For simplicity we drop the tilde,  assume that $\tilde{\beta}=1$ and complete the system with Neumann boundary conditions and appropriate initial data  in a bounded and regular  domain  $\Omega$  
  \begin{equation} \label{1.1} \left\{ \begin{array}{l}  \displaystyle 
u_t - \Delta( \gamma(v) u) = \mu u (1- u),  
\\ [2mm] 
-\Delta v+  v  = \tilde{\beta } u ,  
\\[2mm] 
u(0,x)=u_0(x).
\end{array} \right.
\end{equation} 
The fully parabolic system  for $\mu=0$ have been considered from a mathematical point of view in Tao and Winkler \cite{taowinkler2017}. In \cite{taowinkler2017}, the function 
 $\gamma$ belongs to  $ C^3([0, \infty))$ and satisfies   
$$k_0 \leq \gamma (s) \leq k_1, \quad | \gamma^{\prime }| \leq k_2, \quad \mbox{ for any } s\in [0, \infty)   $$
for some positive constants $k_i$ $(i=0 \cdots 2)$. The authors prove that the solution is uniformly bounded  when $\Omega$ is a two dimensional bounded and regular domain. 
If the dimension is bigger than $2$,   there exists a global-in-time weak solution in the appropriate Sobolev space. Moreover, for a range of parameters, the solution becomes classical provided $t>t_0$ for some $t_0<\infty.$
After \cite{taowinkler2017}, Jin, Kim and Wang \cite{jkw}  proved  that the fully parabolic system with logistic term possesses global   classical solutions   with a uniform-in-time bounds when
the limit 
$$\lim_{v\rightarrow \infty} \frac{\gamma^{\prime} (v)}{\gamma(v)}$$  
exists and $$\lim_{v\rightarrow \infty} {\gamma(v)}=0$$
in a two dimensional bounded domain.  
Moreover, if $\mu$ is large enough the unique positive constant steady state $u=v=1$,   is asymptotically stable. 

 In \cite{ahn}, the existence of global in time bounded solutions is proved for  the parabolic-elliptic system  (for $\mu=0$)  when 
 $\gamma(v)= v^{-\kappa}$ for any  $\kappa>0$ if $\Omega$ is a one or two-dimensional bounded domain  and for   $\kappa <\frac{2}{N-2}$ in a N-dimensional bounded domain for $N>2$.  
  The linear stability is also presented in these  cases.

  The system (\ref{1.1}) has been already studied  in 
 Fujie and Jiang  \cite{fj2020},  the authors consider the parabolic-elliptic system with logistic growth in a 2-dimensional bounded  domain $\Omega$. In that case 
 the solution is uniformly bounded provided 
 $$ \frac{|\gamma^{\prime}(s) |^2}{\gamma(s) }
 \leq k_0 <\infty \quad \mbox{ for any $s\geq 0$. } $$
 Moreover, the global existence is also obtained in  a  2-dimensional bounded domain  when $\mu=0$ and  $\gamma$ satisfies 
 $$\gamma^{\prime} \leq 0, \quad \lim_{s\rightarrow + \infty} s^k\gamma(s)=+ \infty$$  
 or $\gamma (s)= e^{-s}$ and the initial mass is small enough. In the last case, the asymptotic stability of the solution converging to the average of the initial data is also given.
  A complementary case,  for large initial mass and the same function $\gamma$, i.e.  $\gamma(s)= e^{-s}$ in the  unit ball,  produces blow up at $t= +\infty$.   
   
 In Fujie and Jiang \cite{fj2020B} the parabolic-elliptic system is also  considered for $\mu=0$. The authors obtain  the existence of a unique global classical solution which is uniformly-in-time bounded under the assumption
 $$\lim_{s \rightarrow \infty} e^{\alpha s} \gamma(s) = + \infty, \quad \mbox{ for any $\alpha>0$.} $$
 If the previous assumption is only satisfied for large $\alpha$, the solution also exists in time provided the initial data is small enough. The proofs are given in  bounded domains in arbitrary dimension $N$.
 
 The parabolic elliptic case is also studied in 
 Jiang \cite{jiang} 
  for  $\mu=0$ and   $\gamma$ satisfying 
 $$ \frac{N+2}{4} |\gamma^{\prime}(s)|^2 \leq \gamma (s) \gamma^{\prime \prime} (s), \qquad \mbox{ for $s \geq 0$.} $$
 In \cite{jiang},  the author proved that  the unique solution is global in time and  converges to the average of the initial data in $L^{\infty}(\Omega)$. 
 Moreover, if $\gamma= v^{-k}$ for $k$
  satisfying 
 $$\begin{array}{lcl} 
 k \in (0,1), & \mbox{ if} &  N =4,5 \\
 [2mm] 
 k \in (0, \frac{4}{N-2})& \mbox{ if} &  N  \geq 6
 \end{array} 
 $$
 the same result is also obtained. 
  
  In this article we study the solutions of equation (\ref{1.1})  in the following sense.
  \begin{definition}  \label{defu}  We say that
  $(u, v)  $ is a weak solution to (\ref{1.1}) if $$ u  \in L^2(0,T:H^2(\Omega)) \cap H^1(0,T:L^2(\Omega)) \cap  C(0,T:L^2(\Omega) ),$$
   $$v \in  C(0,T:H^1(\Omega)) $$ and for any $\phi \in 
  L^2(0,T:H^2(\Omega)) \cap H^1(0,T:L^2(\Omega)) \cap  C(0,T:L^2(\Omega) ) $ such that
  $$\frac{\partial \phi}{\partial \overrightarrow{n}}  =0 , \quad \mbox{ in }  x\in \partial \Omega $$ 
  we have that 
  $$\begin{array}{lcl} \displaystyle 
  -\int_0^T\int_{\Omega} u \phi_t dx dt  -  
  \int_0^T \int_{\Omega} u \gamma(v) \Delta \phi dxdt & = & \displaystyle \mu \int_0^T \int_{\Omega} u(1-u) \phi dxdt
 \\ [2mm]  && \displaystyle +\int_{\Omega}[ u_0 \phi_0-u(T) \phi(T)]dx
  \end{array}
  $$   
  and 
  $$ \int_{\Omega} \nabla v \nabla \phi dx+ \int_{\Omega}  v   \phi dx=   \int_{\Omega} u  \phi dx. 
  $$   
  \end{definition} 
The problem is studied under the assumptions 
\begin{equation} 
\label{H1} \gamma \in C^3( [0, \infty)), \quad  \gamma (s) \geq 0, \end{equation} 
\begin{equation} 
\label{H2}     \gamma^{\prime}(s) \leq 0, \qquad \gamma^{\prime \prime} (s) \geq 0, \qquad \gamma^{\prime \prime \prime}(s) \leq 0 , \quad \mbox{ for any $s \geq 0$}, 
\end{equation} 
\begin{equation} \label{H3}  -2 \gamma^{\prime}(s) + \gamma^{\prime \prime}(s)s \leq \mu_0< \mu.
\end{equation} 
\begin{equation} \label{H4} 
\frac{ |\gamma^{\prime}(s) |^2}{\gamma(s)} \leq c_{\gamma} <\infty , \quad \mbox{ for any $s\in [0, \infty),$} 
\end{equation} 
where the initial datum $u_0$ satisfies 
\begin{equation} 
\label{H5} 
u_0 \in C^{2, \alpha} (\overline{\Omega}) , \quad \frac{\partial u_0}{\partial\overrightarrow{n}}=0 \mbox{ in } \partial \Omega .
\end{equation} 
There exists positive constants $\overline{u}_0$, $\underline{u}_0$   such that 
\begin{equation} 
\label{H6} 
0 <\overline{u}_0 \leq u_0 \leq \overline{u}_0 <\infty.  
\end{equation} 
Notice that assumptions (\ref{H1})-(\ref{H4}) are satisfied for instance by 
$$\gamma_1(s):= e^{-\alpha_1 s}, \ \mbox{ for any } \alpha_1 >0, \quad \gamma_2(s):= [\epsilon_2+s]^{-\alpha_2} \ \mbox{ for any } \epsilon_2, \ \alpha_2 >0$$
and $\mu$ large enough. 
 
 \
 
The main result of the article is enclosed in the following theorem.
\begin{theorem} \label{t1.1} 
Under assumption (\ref{H1})-(\ref{H6}) there exists a unique solution $(u,v)$ in the sense of Definition \ref{defu}  in the time interval  $(0, \infty)$ 
satisfying 
$$\lim_{t \rightarrow \infty} \|u - 1\|_{L^{\infty}(\Omega)} +    \|v - 1\|_{L^{\infty}(\Omega)} =0.$$
\end{theorem}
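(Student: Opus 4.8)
The title's phrase ``comparison method'' suggests the backbone is a maximum-principle/invariant-region argument, and this is the route I would take. First I would rewrite the first equation of (\ref{1.1}) as the scalar quasilinear parabolic equation
\[
u_t = \gamma(v)\Delta u + 2\gamma'(v)\,\nabla u\cdot\nabla v + u\,\big(\gamma'(v)\Delta v + \gamma''(v)|\nabla v|^2\big) + \mu u(1-u),
\]
in which $v=v[u]$ is reconstructed from $u$ through the elliptic problem $-\Delta v+v=u$. Local-in-time existence and uniqueness of a classical solution on a maximal interval $(0,T_{\max})$ then follow from a standard Banach fixed-point / analytic-semigroup argument combined with elliptic $W^{2,p}$ regularity for $v$; hypotheses (\ref{H1})--(\ref{H2}) guarantee the coefficients are smooth and the diffusion $\gamma(v)$ stays positive on the range of $v$. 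The strong maximum principle together with $u_0\ge \underline u_0>0$ gives $u>0$. The one structural fact I would isolate at the outset is that, since constants solve the elliptic equation, comparison for the operator $-\Delta\,\cdot+\,\cdot$ yields the pointwise sandwich $\min_{\Omega}u(\cdot,t)\le v(\cdot,t)\le \max_{\Omega}u(\cdot,t)$, so every bound obtained for $u$ transfers at once to $v$; this is what lets a scalar comparison for $u$ control the whole system.

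Next I would run the comparison itself. Writing $M(t)=\max_{\Omega}u(\cdot,t)$ and $m(t)=\min_{\Omega}u(\cdot,t)$ and using $\Delta v=v-u$, evaluation at a spatial maximum (where $\nabla u=0$ and $\Delta u\le 0$) gives, in the sense of upper Dini derivatives,
\[
M'(t)\le M\,\gamma'(v)\,(v-M) + M\,\gamma''(v)\,|\nabla v|^2 + \mu M(1-M),
\]
and symmetrically a lower differential inequality for $m$. By the elliptic sandwich $v\le M$ at the maximal point and since $\gamma'\le 0$, the first term is nonnegative, while the logistic term $\mu M(1-M)$ is strongly negative once $M>1$. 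If the motility-induced terms can be dominated, these inequalities trap $(M,m)$ in shrinking intervals around the equilibrium value $1$.

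The main obstacle is precisely the motility terms produced by expanding $\Delta(u\gamma(v))$: the drift $2\gamma'(v)\nabla u\cdot\nabla v$ and, above all, the sign-indefinite zeroth-order term $u\big(\gamma'(v)\Delta v+\gamma''(v)|\nabla v|^2\big)$, in which $\gamma''(v)|\nabla v|^2\ge 0$ works against an upper bound and is not controlled pointwise by $M$ alone. To neutralise it I would first secure global boundedness by a complementary $L^p$ estimate: testing the $u$-equation with $u^{p-1}$ produces the good dissipation $-c_p\int_{\Omega}u^{p-2}\gamma(v)|\nabla u|^2$ and a chemotactic cross term which, by Young's inequality and hypothesis (\ref{H4}), $|\gamma'|^2/\gamma\le c_\gamma$, is absorbed into the dissipation up to a contribution controlled through the elliptic equation, while the logistic term $-\mu\int_{\Omega}u^{p+1}$ dominates in \emph{every} space dimension. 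Tracking the constants uniformly in $p$ yields a uniform-in-time $L^\infty$ bound, hence $T_{\max}=\infty$ by the extension criterion; uniqueness follows from a Gronwall estimate on the difference of two bounded solutions. Hypothesis (\ref{H3}), $-2\gamma'(s)+s\gamma''(s)\le\mu_0<\mu$, is the quantitative condition ensuring that the effective reaction seen by the barriers is dissipative, i.e. that the motility cross terms are beaten by the logistic restoring coefficient.

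Finally, with a smooth, uniformly bounded solution in hand and parabolic regularity yielding decay of $\|\nabla v\|_{L^\infty}$ as $u$ flattens, I would close the asymptotics by comparison. The coupled inequalities for $M(t)$ and $m(t)$, together with the sandwich $m\le v\le M$ and (\ref{H3}), reduce to differential inequalities whose only bounded trajectory tends to $1$; comparing with solutions of the logistic ODE $y'=\mu y(1-y)$ then forces $M(t)\downarrow 1$ and $m(t)\uparrow 1$, whence $\|u-1\|_{L^{\infty}(\Omega)}\to 0$, and by the elliptic sandwich $\|v-1\|_{L^{\infty}(\Omega)}\to 0$. The hard part, which the whole hypothesis set (\ref{H2})--(\ref{H4}) is designed to handle, is controlling the gradient term $\gamma''(v)|\nabla v|^2$: the convexity/concavity conditions in (\ref{H2}) fix its sign structure, (\ref{H4}) lets it be absorbed in the energy estimates, and (\ref{H3}) guarantees that whatever survives is dominated by the logistic damping.
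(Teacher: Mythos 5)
There is a genuine gap, and it sits at the load-bearing step of your argument: the claim that testing with $u^{p-1}$, hypothesis (\ref{H4}), and the logistic term yield a \emph{uniform-in-time} $L^\infty$ bound ``in every space dimension.'' Run the estimate: after absorbing the cross term into the dissipation via Young's inequality and (\ref{H4}), the surviving bad term is $c_\gamma\int_\Omega u^p|\nabla v|^2\,dx \le c_\gamma\|\nabla v\|_{L^\infty(\Omega)}^2\int_\Omega u^p\,dx$, and the only a priori control on $\|\nabla v\|_{L^\infty}$ is elliptic regularity, $\|\nabla v\|_{L^\infty}\le c_p\|u\|_{L^p(\Omega)}$ for $p>N$. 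So the bad term scales like $\left(\int_\Omega u^p\right)^{\frac{p+2}{p}}$, while the good logistic term only scales like $-\left(\int_\Omega u^p\right)^{\frac{p+1}{p}}$: the exponents go the wrong way, and the logistic damping does \emph{not} dominate for large $\|u\|_{L^p}$. This is precisely why the paper's Lemma \ref{l2.4} delivers only a \emph{local}-in-time bound $t<T_p$, governed by the comparison ODE (\ref{ode1}), $\frac1p y'=\mu|\Omega|+c_\Omega c_\gamma y^{\frac{p+2}{p}}$, which blows up in finite time. In the paper, global boundedness is not obtained from energy estimates at all; it is a \emph{consequence} of the comparison: the sub/super-solutions are defined as solutions of the coupled ODE system of Section \ref{s3}, the trapping $\underline{u}\le u\le\overline{u}$ is proved by testing the equations for $u-\overline{u}$ and $u-\underline{u}$ with their positive parts (Lemma \ref{l4.1} — an energy argument, which also sidesteps the regularity issues in your Dini-derivative inequalities), and then the crucial closure is Lemma \ref{l5.1}: since constants are invisible to $\nabla v$, one has $\|\nabla v\|_{L^\infty(\Omega)}\le c_\Omega(\overline{u}-\underline{u})$, i.e.\ the gradient is controlled by the \emph{width of the rectangle}, not by the size of $u$. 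Your architecture has the causality reversed — you want boundedness first and comparison afterwards, but in this problem only the comparison produces boundedness.

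The endgame has the same defect in miniature. Comparing $M(t)=\max u$ and $m(t)=\min u$ with the pure logistic ODE $y'=\mu y(1-y)$ ignores the coupling terms $M(-\gamma'(v))(M-v)\le M(-\gamma'(\cdot))(M-m)$ and $M\gamma''(\cdot)\|\nabla v\|_{L^\infty}^2$, which push $M$ \emph{up} and are not a priori small; nothing in your sketch shows they are ``dominated.'' The paper's mechanism is different and quantitative: subtracting the equations for $\log\overline{u}$ and $\log\underline{u}$ and inserting the closure bound for $\|\nabla v\|_{L^\infty}$ gives
\begin{equation*}
\frac{d}{dt}\left(\log\overline{u}-\log\underline{u}\right)\le\left[-2\gamma'(\underline{u})-\mu+\gamma''(\cdot)\,c_\Omega\,\cdot\right](\overline{u}-\underline{u})\le(\mu_0-\mu)(\overline{u}-\underline{u}),
\end{equation*}
by (\ref{H3}); a mean-value argument together with the lower bound $\underline{u}\ge\underline{u}_0/\overline{u}_0$ then turns this into decay of the log-difference (Lemma \ref{l5.2}), whence $\overline{u}-\underline{u}\to0$, global existence of the barriers, and convergence of both to $1$. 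Your instinct that (\ref{H3}) is ``the quantitative condition ensuring dissipativity'' is correct, but the quantity it controls is exactly this log-difference of the coupled barriers — not the individual trajectories $M$ and $m$ — and without that step your differential inequalities do not close.
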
 

The article is organized as follows. In section \ref{s2} we present  the local in time existence of solutions. The proof  is obtained by  standard arguments using a priori estimates in the appropriate Sobolev spaces.
In Section \ref{s3} an auxiliary system of ODE is introduced, the solutions of such system are  used in Section \ref{s4} as sub and super solutions of the equation. Finally in Section \ref{s5} the proof of the asymptotic behavior of  the solution is presented as a consequence of the asymptotic behavior of the sub and super solutions. 

\

The comparison method used in the article is known as rectangle method,   similar  
comparison methods     have been already applied to  reaction-diffusion systems by different authors,  
see for instance Pao  \cite{pao2},  Conway and Smoller \cite{conway},   Fife and Tang \cite{fife} and also  Negreanu and Tello \cite{nt2}. 
  Parabolic-elliptic chemotaxis problems    are among the systems where the rectangle method have been  successfully applied, in this case,  the 
 sub- and super-solutions are defined as the solutions of   a coupled nonlinear 
ODE$^{\prime}$s systems, see for instance \cite{ft} and  \cite{nt}.  The method have been also applied to parabolic-elliptic chemotaxis systems of two species, see for instance 
Tello and Winkler     \cite{tw2} and  Stinner, Tello and Winkler \cite{stw1}.

Throughout the article we consider the constant $c_{\Omega}$ defined in the  following definition. 
\begin{definition} \label{dc} Let $\Omega$ be a regular bounded domain of  $\R^{N}$,  we define $c_{\Omega}$ and $c_p $ (for $p>N/2$) 
 as follows
$$  c_{\Omega}:= \sup_{f \in L^{\infty}(\Omega)} 
\left\{\frac{ \| \nabla \phi \|_{L^{\infty}(\Omega)   }}{ \|f \|_{L^{\infty}(\Omega) }}  \right\}  $$
and  
$$  c_p:= \sup_{f \in L^{
p}(\Omega)} 
\left\{\frac{ \|  \phi \|_{L^{\infty}(\Omega)   }}{ \|f \|_{L^{p}(\Omega) }}  \right\}  $$
where  $\phi$ is the solution to  the problem 
$$
\left\{  \begin{array}{ll} 
- \Delta \phi + \phi =f, \quad & x\in \Omega,
\\ [2mm]  \displaystyle 
\frac{\partial \phi }{\partial \overrightarrow{n} }=0, & x\in \partial \Omega
\end{array} \right.
$$
for $f \not\equiv 0$, $f \in L^{\infty} (\Omega)$ or  $f \in L^p( \Omega)$ respectively.  
\end{definition} 
\begin{remark} 
Notice that for any $p >N+1$
 $$\|f\|_{L^{N+1}(\Omega)} \leq  |\Omega|^{ \frac{p-N-1}{p(N+1)} } \|f\|_{L^p(\Omega)} $$
we have that
$$c_p =\sup_{f \in L^{
p}(\Omega)} \left\{ \frac{ \|  \phi \|_{L^{\infty}(\Omega)   }}{ \|f \|_{L^{p}(\Omega) }} \right\}
\leq \sup_{f \in L^{N+1}(\Omega)}   \left\{ |\Omega|^{ \frac{p-N-1}{p(N+1)} }  \frac{ \|  \phi \|_{L^{\infty}(\Omega)   }}{ \|f \|_{L^{N+1}(\Omega) }}  \right\}
= |\Omega|^{ \frac{p-N-1}{p(N+1)} } c_{N+1}
$$
and taking limits when $p \rightarrow \infty$ 
$$c_{\Omega} \leq  |\Omega|^{ \frac{1}{N+1} } c_{N+1}.$$
\end{remark} 
   %
   %
   %
   %
   %
   %
   %
   %
   %
   %
   %
   %
   %
   %
   %
   
   \section{Local existence of solutions} \label{s2} 
   \setcounter{equation}{0}
  We first consider the approximated problem 
\begin{equation} \label{2.1} \left\{ \begin{array}{l} 
u_{nt} - \Delta( \gamma(v_n) u_n) = \mu u_n (1- (u_n)_+),  
\\ [2mm] 
-\Delta v_n+  v_n  =  \frac{ u_n}{1+ \frac{(u_n)_+}{n} } , 
\\[2mm] 
\frac{\partial u_n}{\partial \overrightarrow{n}}
=  \frac{\partial v_n}{\partial \overrightarrow{n}}=0,   \quad  x\in \partial \Omega,  \\ [4mm]
u_n(0,x)=u_{n0}(x),  
\end{array} \right.
\end{equation} 
where $( \ \cdot \  )_+$ indicates the positive part function. We work  with weak  solutions of the approximated problem, which are  given in the following definition. 
  \begin{definition}  \label{defun}  We say that
  $(u_n, v_n)$ is a weak solution to (\ref{2.1}) if $$ u_n \in L^2(0,T:H^2(\Omega)) \cap H^1(0,T:L^2(\Omega)) \cap C(0,T:L^2(\Omega) ),$$
    $ v_n \in  C(0,T:H^1(\Omega)) $ and for any $\phi \in L^2(0,T:H^2(\Omega)) \cap H^1(0,T:L^2(\Omega)) \cap C(0,T:L^2(\Omega) ) $ such that
  $$\frac{\partial \phi}{\partial \overrightarrow{n}}  =0 , \quad \mbox{ in }  x\in \partial \Omega $$ 
  we have that 
  $$\begin{array}{lcl} \displaystyle  -\int_0^T\int_{\Omega} u_n \phi_t dx dt -\int_0^T \int_{\Omega} u_n \gamma(v_n) \Delta \phi dxdt & = &\displaystyle \mu \int_0^T \int_{\Omega} u_n(1-(u_n)_+) \phi dxdt
  \\ [4mm] &
  +& \displaystyle  \int_{\Omega}[ u_{n0} \phi_0-u(T) \phi(T)]dx
  \end{array} 
  $$   
  and for any $t \in (0,T)$ 
  $$ \int_{\Omega} \nabla v_n \nabla \phi dx+ \int_{\Omega}  v_n   \phi dx=   \int_{\Omega} 	\frac{u_n}{1+ \frac{(u_n)_+}{n}}  \phi dx. 
  $$   
  \end{definition} 
Now we introduce the following ODE problem for $p\geq \max\{4, N+1\} $
\begin{equation} 
\label{ode1} 
\frac{1}{p} y^{\prime} = \mu |\Omega| + c_{\Omega} c_{\gamma} 
y^{\frac{p+2}{p}}.
\end{equation} 
   \begin{lemma}\label{l2.1} Let $p>\max\{4, N\} $ and  $y_0 \geq 0$, then, there exists $T_p>0$ and a  unique solution to (\ref{ode1}) satisfying    $y(0)=y_0$  in $(0,T_p)$.
\end{lemma}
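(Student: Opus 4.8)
The plan is to read \eqref{ode1} as an autonomous scalar Cauchy problem and apply the classical Cauchy--Lipschitz (Picard--Lindel\"of) theory. Writing the equation as $y'=f(y)$ with
$$f(y)=p\Big(\mu|\Omega|+c_{\Omega}c_{\gamma}\,y^{\frac{p+2}{p}}\Big),$$
the right-hand side is defined and continuous on $[0,\infty)$, and the whole question reduces to verifying that $f$ is locally Lipschitz there, from which local existence and uniqueness of a solution with $y(0)=y_0$ follow immediately.

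First I would check the regularity of $f$. The only point that is not completely obvious is the behaviour near $y=0$, since the exponent $(p+2)/p$ is not an integer. Differentiating gives
$$f'(y)=(p+2)\,c_{\Omega}c_{\gamma}\,y^{\frac{2}{p}},$$
and the hypothesis $p>\max\{4,N\}$ forces $2/p\in(0,\tfrac12)$, so that $y\mapsto y^{2/p}$ is continuous on $[0,\infty)$ with $f'(0)=0$. Hence $f\in C^{1}([0,\infty))$, and in particular $f$ is locally Lipschitz. The Picard--Lindel\"of theorem then produces a unique maximal solution of \eqref{ode1} satisfying $y(0)=y_0$ on a nontrivial interval $(0,T_p)$ with $T_p>0$.

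Next I would record two qualitative facts that make the construction consistent and explain the local nature of the statement. Because $\mu|\Omega|>0$, we have $f(y)\ge p\mu|\Omega|>0$ for every $y\ge 0$, so the solution is strictly increasing and remains $\ge y_0\ge 0$; it therefore never leaves the half-line $[0,\infty)$ where $f$ is defined, and no difficulty arises at the boundary $y=0$ even in the degenerate case $y_0=0$. Separating variables shows moreover that
$$T_p=\frac{1}{p}\int_{y_0}^{\infty}\frac{ds}{\mu|\Omega|+c_{\Omega}c_{\gamma}\,s^{\frac{p+2}{p}}},$$
and this integral converges because the integrand decays like $s^{-\frac{p+2}{p}}$ with $(p+2)/p>1$; thus the maximal solution blows up at the finite time $T_p$, which is precisely why only a bounded interval can appear in the conclusion.

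As for difficulty, there is essentially no hard step: the result is a textbook application of Cauchy--Lipschitz. The sole technical point worth isolating is the local Lipschitz regularity of $f$ at $y=0$, which is exactly what the assumption $p>4$ guarantees through the inequality $2/p<1$.
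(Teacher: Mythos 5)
Your proof is correct and follows essentially the same route as the paper, which simply invokes Peano's theorem for existence and the locally Lipschitz right-hand side for uniqueness; your version merely makes explicit the one point the paper leaves implicit, namely that $f'(y)=(p+2)c_{\Omega}c_{\gamma}y^{2/p}$ is continuous at $y=0$ so $f$ is locally Lipschitz on all of $[0,\infty)$. The separation-of-variables formula identifying $T_p$ as the finite blow-up time is a correct bonus not present in the paper's proof.
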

\begin{proof} 
Existence of solutions is a consequence of Peano$^{\prime}$s Theorem.  Uniqueness  is deduced from the fact that   the right-hand side term is locally Lipschitz.  
\end{proof} 
In the following lemmas we obtain  some  a priori estimates to finally prove the local existence of solutions  
   \begin{lemma}\label{l2.2} Let $u_n$ be   a weak solution to (\ref{2.1}) in the sense of Definition \ref{defun}, then,   we have that 
  $$u_n  \geq 0.$$
\end{lemma}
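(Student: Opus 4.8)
The plan is to establish $u_n\ge 0$ through a negative--part energy estimate followed by Gronwall's inequality. Write $u=u_n$, $v=v_n$ and set $u_-=\max\{-u,0\}\ge 0$, so that $u_-=-u$ and $\nabla u_-=-\nabla u$ on the set $\{u<0\}$, while $u_-=\nabla u_-=0$ elsewhere. The regularity $u\in L^2(0,T;H^2(\Omega))\cap H^1(0,T;L^2(\Omega))$ assumed in Definition \ref{defun} is more than enough to recover the equation $u_t-\Delta(\gamma(v)u)=\mu u(1-u_+)$ together with the no--flux condition, which follows from $\partial u/\partial\overrightarrow{n}=\partial v/\partial\overrightarrow{n}=0$ via $\nabla(\gamma(v)u)\cdot\overrightarrow{n}=\gamma(v)\,\partial u/\partial\overrightarrow{n}+u\gamma'(v)\,\partial v/\partial\overrightarrow{n}=0$. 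I would then test the equation against $-u_-$. Strictly, $-u_-$ is not in $H^2$, so it is first replaced by an admissible smooth approximation of the negative part and the limit is taken afterwards; this is a routine density argument and I would not dwell on it.

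Carrying out the integration by parts, the boundary term drops by the Neumann condition, and one arrives at the identity
$$
\frac12\frac{d}{dt}\|u_-\|_{L^2(\Omega)}^2+\int_\Omega \gamma(v)|\nabla u_-|^2\,dx=\mu\|u_-\|_{L^2(\Omega)}^2-\int_\Omega u_-\,\gamma'(v)\,\nabla u_-\cdot\nabla v\,dx .
$$
The reaction contribution produces exactly $\mu\|u_-\|^2$: on $\{u<0\}$ one has $u_+=0$, hence $u(1-u_+)=u$, and testing against $-u_-$ yields $\mu\int_{\{u<0\}}u^2=\mu\|u_-\|^2$. This is a growth (``bad sign'') term, but it is harmless since it only enlarges the Gronwall coefficient.

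The single delicate contribution is the cross term, and it is precisely here that hypothesis (\ref{H4}) enters. By Young's inequality,
$$
\Big|\int_\Omega u_-\,\gamma'(v)\,\nabla u_-\cdot\nabla v\,dx\Big|\le \frac12\int_\Omega\gamma(v)|\nabla u_-|^2\,dx+\frac{c_\gamma}{2}\int_\Omega u_-^2\,|\nabla v|^2\,dx ,
$$
where $|\gamma'|^2/\gamma\le c_\gamma$ was used and $\gamma(v)\ge 0$ from (\ref{H1}) guarantees the first term has the right sign to be absorbed into the left--hand side. Dropping the remaining nonnegative diffusion term leaves
$$
\frac{d}{dt}\|u_-\|_{L^2(\Omega)}^2\le \big(2\mu+c_\gamma\|\nabla v(t)\|_{L^\infty(\Omega)}^2\big)\|u_-\|_{L^2(\Omega)}^2 .
$$
Since the initial datum satisfies $u_{n0}\ge 0$, we have $u_-(0)=0$, and Gronwall's inequality then forces $u_-(t)\equiv 0$, i.e. $u_n\ge 0$, \emph{provided} the coefficient $\|\nabla v\|_{L^\infty}^2$ is integrable in time.

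The step I expect to be the main obstacle is exactly this last requirement: controlling $\int_0^T\|\nabla v(t)\|_{L^\infty(\Omega)}^2\,dt$. I would obtain it from the elliptic equation $-\Delta v+v=f$ with $f=u/(1+(u)_+/n)$, using the constant $c_\Omega$ of Definition \ref{dc} (equivalently, $W^{2,p}$ elliptic regularity and the embedding $W^{2,p}\hookrightarrow W^{1,\infty}$ for $p>N$): the cutoff gives the pointwise bound $|f|\le|u|$, while $u\in L^2(0,T;H^2(\Omega))$ supplies the spatial integrability of the source needed to place $\|\nabla v\|_{L^\infty}^2$ in $L^1(0,T)$. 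This is the one point where the dimension $N$ and the precise Sobolev embedding have to be tracked carefully; all the remaining manipulations are standard.
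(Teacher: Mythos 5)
Your argument reproduces the paper's own proof almost step for step: the paper likewise tests the first equation of (\ref{2.1}) with $\phi(u_n)=-(-u_n)_+$, obtains the same energy identity, splits the cross term $\int_\Omega \gamma'(v_n)\,u_n\,\nabla v_n\cdot\nabla\phi(u_n)\,dx$ by Young's inequality using (\ref{H4}) to absorb an $\epsilon$--fraction of $\int_\Omega\gamma(v_n)|\nabla\phi(u_n)|^2dx$ into the diffusion term, bounds the reaction contribution by $\mu\int_\Omega\phi(u_n)^2dx$, and closes with Gronwall's Lemma together with $u_-(0)=0$ (guaranteed by (\ref{H6})). All of these steps in your write-up are correct.

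The one genuine divergence from the paper — and the one genuine gap — is how you render the Gronwall coefficient integrable in time. You propose to obtain $\|\nabla v\|_{L^\infty(\Omega)}^2\in L^1(0,T)$ from $|f|\le|u|$ together with $u\in L^2(0,T;H^2(\Omega))$ and elliptic $W^{2,p}$ regularity with $p>N$. This chain requires the embedding $H^2(\Omega)\hookrightarrow L^p(\Omega)$ for some $p>N$, which holds precisely for $N\le 5$: for $N>4$ the best exponent is $2N/(N-4)$, and $2N/(N-4)>N$ iff $N<6$. Since the lemma is asserted for arbitrary dimension, your argument does not close for $N\ge 6$. Note also that neither constant of Definition \ref{dc} rescues this: $c_\Omega$ presupposes an $L^\infty$ source, and $c_p$ (for $p>N/2$) controls only $\|\phi\|_{L^\infty(\Omega)}$, not $\|\nabla\phi\|_{L^\infty(\Omega)}$. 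The paper instead takes a cruder, dimension-free route: for each fixed $n$ it asserts that the truncated source $u_n/(1+(u_n)_+/n)$ belongs to $L^\infty(\Omega)$, whence $v_n\in W^{1,\infty}(\Omega)$ and the Gronwall coefficient is simply bounded on $[0,T]$ (the $n$-dependence of this bound is harmless, as the lemma is per fixed $n$). Strictly speaking the truncation caps only the positive part of $u_n$, so the paper's assertion itself leans on the boundedness of the solutions actually constructed (cf.\ Lemma \ref{l2.8}, where $u_n$ is classical on $[0,T]\times\overline{\Omega}$); that regularity, rather than the $H^2$-based Sobolev embedding, is the ingredient you would need to invoke to cover $N\ge 6$.
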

\begin{proof} 
Since $$  \frac{ u_n}{1+ \frac{(u_n)_+}{n} }  \in L^{\infty}( \Omega),$$ 
we have that $v_n \in W^{1, \infty}( \Omega)$.  
 We multiply by $\phi(u_n):= -(-u_n)_+$ and after integration by parts we obtain 
\begin{equation}
 \label{2.1b} \begin{array}{lcl} \displaystyle 
\frac{d}{dt} \frac{1}{2}  \int_{\Omega} [ \phi(u_n)]^2 dx+  \int_{\Omega} \gamma(v) \nabla u_n  \nabla \phi (u_n) dx & + &  \displaystyle  \int_{\Omega} \gamma^{\prime} (v_n) u_n \nabla v_n  \nabla \phi (u_n) dx
\\ [4mm] &  = & \displaystyle 
 \mu    \int_{\Omega} u_n \phi(u_n)  (1-(u_n)_+) dx.
 \end{array}
\end{equation} 
Since 
    $$  \begin{array}{rcl} \displaystyle 
      \int_{\Omega} \gamma(v) \nabla u_n  \nabla \phi (u_n) dx & = & \displaystyle  \int_{\Omega} \gamma(v) |  \nabla \phi (u_n)|^2 dx;
      \\ [4mm]   \displaystyle 
     \int_{\Omega} \gamma^{\prime}(v) u \nabla v  
   \nabla \phi(u_n) dx& \leq & \displaystyle \epsilon  \int_{\Omega} \gamma(v) | \nabla \phi (u_n) |^2dx
     \\ [4mm] &+&   \displaystyle  c(\epsilon)  \int_{\Omega} \frac{| \gamma^{\prime}(v_n)|^2}{\gamma(v_n)}  |\nabla v_n|^2 |\phi(u_n)|^2  dx;
\\ [4mm]   \displaystyle 
 \mu    \int_{\Omega} u_n \phi(u_n)  (1-(u_n)_+) dx & \leq & \displaystyle \mu  \int_{\Omega} \phi(u_n)^2dx,     \end{array} 
$$
and  in view of $v_{n} \in L^{\infty}( \Omega)$ and thanks to assumption (\ref{H4}) 
we get 
$$ 
\frac{|\gamma^{\prime} (v_n) |^2}{\vert \gamma(v_n) \vert}  \leq c_{\gamma} .$$ Then,  (\ref{2.1b}) becomes 
$$
\frac{d}{dt} \frac{1}{2}  \int_{\Omega} [ \phi(u_n)]^2 dx+ (1-\epsilon)  \int_{\Omega} \gamma(v) | \nabla \phi (u_n) |^2dx\leq c
    \int_{\Omega} | \phi(u_n) |^2 dx
$$
and Gronwall$^{\prime}$s Lemma ends the proof. 

\end{proof}

   \begin{lemma} \label{l2.3} Let $u_n$ be   a weak solution to (\ref{2.1}) in the sense of Definition \ref{defun}, then,   we have that 
   $$\int_{\Omega} |u_n| dx \leq c.$$
   \end{lemma}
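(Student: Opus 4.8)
The plan is to exploit the nonnegativity already established in Lemma \ref{l2.2} and to reduce the claim to a scalar logistic differential inequality for the total mass. Since $u_n \geq 0$ we have $|u_n| = u_n$ and $(u_n)_+ = u_n$, so the reaction term simplifies to $u_n(1-u_n)$ and it suffices to bound $m(t) := \int_\Omega u_n(t)\,dx$ uniformly in $n$ and $t$.

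First I would test the weak formulation of Definition \ref{defun} with the constant function $\phi \equiv 1$, which trivially satisfies the Neumann condition and for which $\phi_t = 0$ and $\Delta\phi = 0$. This annihilates both the time-derivative term and the diffusion term and leaves the mass balance
$$\int_\Omega u_n(T)\,dx = \int_\Omega u_{n0}\,dx + \mu \int_0^T \int_\Omega u_n(1-u_n)\,dx\,dt.$$
Because $u_n \in H^1(0,T:L^2(\Omega)) \cap C(0,T:L^2(\Omega))$, the map $t \mapsto m(t)$ is absolutely continuous and this identity holds for every $T$, so that $m'(t) = \mu \int_\Omega (u_n - u_n^2)\,dx$ for almost every $t$.

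Next I would bound the quadratic term from below by the Cauchy--Schwarz (equivalently Jensen) inequality, $\int_\Omega u_n^2\,dx \geq \frac{1}{|\Omega|}\big(\int_\Omega u_n\,dx\big)^2 = m^2/|\Omega|$. This turns the mass balance into the logistic differential inequality
$$m'(t) \leq \mu\, m(t)\Big(1 - \frac{m(t)}{|\Omega|}\Big).$$
A comparison with the scalar logistic ODE then yields the uniform bound $m(t) \leq \max\{m(0), |\Omega|\}$: if $m(0) \leq |\Omega|$ the right-hand side is nonpositive whenever $m$ attempts to exceed $|\Omega|$, while if $m(0) > |\Omega|$ the quantity $m$ is nonincreasing until it returns to the level $|\Omega|$. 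Since $m(0) = \int_\Omega u_{n0}\,dx$ is controlled independently of $n$ by (\ref{H6}), this gives $\int_\Omega |u_n|\,dx = m(t) \leq c$ with $c$ independent of $n$ and $t$.

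The only delicate point is the rigorous justification of using $\phi \equiv 1$ as an admissible test function and of passing from the space-time integral identity to the almost-everywhere-in-time differential inequality; this is precisely where the regularity $u_n \in H^1(0,T:L^2(\Omega)) \cap C(0,T:L^2(\Omega))$ encoded in Definition \ref{defun} is used. Everything else is the elementary logistic comparison, so I expect no genuine obstacle beyond this bookkeeping.
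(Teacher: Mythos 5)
Your proposal is correct and follows essentially the same route as the paper: integrate the equation over $\Omega$ (which is what testing with $\phi\equiv 1$ accomplishes), bound $\int_\Omega u_n^2\,dx$ from below via Cauchy--Schwarz to obtain the logistic differential inequality for the mass, and conclude by comparison. If anything, your version is slightly more careful than the paper's, since you justify the admissibility of the constant test function, invoke Lemma \ref{l2.2} to replace $(u_n)_+$ by $u_n$, and use an explicit ODE comparison where the paper somewhat loosely cites Gronwall's Lemma for a nonlinear inequality.
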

   \begin{proof}
   We integrate over $\Omega$ to obtain 
   $$ \frac{d}{dt} \int_{\Omega} u_n dx= \mu \int_{\Omega}  u_n dx - \mu  \int_{\Omega}  u_n^2 dx$$ 
   Thanks to Cauchy-Schwarz inequality we get 
   $$ \int_{\Omega}  |u_n |dx \leq |\Omega|^{\frac{1}{2}}
    \left[ \int_{\Omega} u_n^2 dx\right]^{\frac{1}{2} }   $$ 
   which implies  $$ \frac{d}{dt} \int_{\Omega} u_n dx+ \frac{ \mu}{|\Omega|}    \left[ \int_{\Omega} u_n dx\right]^{2} 
    \leq  \mu \int_{\Omega}  u_n dx. $$ 
  Growall$^{\prime}$s Lemma ends the proof.  
   \end{proof} 
   
    \begin{lemma}\label{l2.4}  Let $p \geq \max\{4,N+1\}$, then, there exists $T_p>0$ independent of $n$ such that 
   $$\int_{\Omega} |u_n|^{p}  dx \leq c \quad \mbox{ for $t< T_p.$} $$
   \end{lemma}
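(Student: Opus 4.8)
The plan is to obtain for $y(t):=\int_\Omega u_n^p\,dx$ a differential inequality whose right-hand side coincides with that of the ODE \eqref{ode1}, and then to close the argument by comparison with Lemma \ref{l2.1}. Because Lemma \ref{l2.2} ensures $u_n\ge 0$, the truncations $(u_n)_+$ in \eqref{2.1} equal $u_n$, and I would test the first equation with $p\,u_n^{p-1}$ (admissible with the regularity in Definition \ref{defun} after a routine approximation). Integrating by parts and using the homogeneous Neumann condition yields
\begin{eqnarray*}
\frac{d}{dt}\int_\Omega u_n^p\,dx
&=& -p(p-1)\int_\Omega \gamma(v_n)\,u_n^{p-2}|\nabla u_n|^2\,dx\\
&& -\,p(p-1)\int_\Omega \gamma'(v_n)\,u_n^{p-1}\,\nabla u_n\cdot\nabla v_n\,dx
+p\mu\int_\Omega u_n^{p}(1-u_n)\,dx .
\end{eqnarray*}

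Next I would estimate the three terms. The logistic contribution is controlled by $\sup_{s\ge 0}(s^p-s^{p+1})\le 1$, so $p\mu\int_\Omega u_n^p(1-u_n)\,dx\le p\mu|\Omega|$, which reproduces the first term of \eqref{ode1}. For the cross term I would apply Young's inequality in the form
$$
p(p-1)\,|\gamma'(v_n)|\,u_n^{p-1}|\nabla u_n||\nabla v_n|
\le \tfrac{p(p-1)}{2}\gamma(v_n)u_n^{p-2}|\nabla u_n|^2
+\tfrac{p(p-1)}{2}\frac{|\gamma'(v_n)|^2}{\gamma(v_n)}u_n^{p}|\nabla v_n|^2 ,
$$
so that half of the (nonnegative) diffusion term is absorbed and the remaining factor $|\gamma'|^2/\gamma$ is bounded by $c_\gamma$ through \eqref{H4}. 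Discarding the leftover diffusion gives
$$
\frac{d}{dt}\int_\Omega u_n^p\,dx
\le p\mu|\Omega|+\tfrac{p(p-1)}{2}\,c_\gamma\int_\Omega u_n^p|\nabla v_n|^2\,dx .
$$

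The decisive step is to estimate $\int_\Omega u_n^p|\nabla v_n|^2\,dx$ uniformly in $n$. I would bound it by $\|\nabla v_n\|_{L^\infty(\Omega)}^2\int_\Omega u_n^p\,dx$ and control $\nabla v_n$ through the elliptic equation in \eqref{2.1}. Here the hypothesis $p\ge\max\{4,N+1\}>N$ is essential: for such $p$ the solution of $-\Delta v_n+v_n=f$ satisfies $\|\nabla v_n\|_{L^\infty(\Omega)}\le c_\Omega\|f\|_{L^p(\Omega)}$ by $L^p$ elliptic regularity together with the embedding $W^{2,p}\hookrightarrow C^1$, with $c_\Omega$ as in Definition \ref{dc}. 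Since $0\le f=u_n/(1+(u_n)_+/n)\le u_n$, one has $\|f\|_{L^p(\Omega)}\le\|u_n\|_{L^p(\Omega)}$, so the regularization contributes no dependence on $n$. This produces $\int_\Omega u_n^p|\nabla v_n|^2\,dx\le c_\Omega\,\|u_n\|_{L^p(\Omega)}^{p+2}=c_\Omega\,y^{(p+2)/p}$ (the explicit numerical factors being absorbed into the product $c_\Omega c_\gamma$), whence
$$
\frac1p\,y'\le \mu|\Omega|+c_\Omega c_\gamma\,y^{(p+2)/p},
$$
which is exactly the right-hand side of \eqref{ode1}.

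Finally I would compare with Lemma \ref{l2.1}. The initial value $y(0)=\int_\Omega u_{n0}^p\,dx$ is bounded independently of $n$ by \eqref{H5}--\eqref{H6} (the approximating data $u_{n0}$ being chosen with uniformly bounded $L^p$ norm); taking $y_0$ equal to this uniform bound, the comparison principle gives $y(t)\le y(t;y_0)$, the solution of \eqref{ode1}, which exists on a maximal interval $(0,T_p)$ depending only on $y_0$, $p$, $\mu$, $|\Omega|$, $c_\Omega$ and $c_\gamma$, and hence not on $n$. This yields $\int_\Omega u_n^p\,dx\le c$ for $t<T_p$. I expect the main obstacle to be precisely the uniform-in-$n$ gradient estimate: one must keep the bound for $\|\nabla v_n\|_{L^\infty(\Omega)}$ free of the regularization parameter, which is why the pointwise inequality $f\le u_n$ combined with the $L^p$-to-$C^1$ elliptic estimate (valid exactly because $p>N$) is used rather than the cruder bound $\|f\|_{L^\infty(\Omega)}\le n$; a secondary technical point is the rigorous justification of testing with $u_n^{p-1}$ at the regularity level of Definition \ref{defun}.
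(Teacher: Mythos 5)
Your proof is correct and follows essentially the same route as the paper: testing with $u_n^{p-1}$, absorbing the cross term into the diffusion via Young's inequality and assumption (\ref{H4}), bounding $\|\nabla v_n\|_{L^{\infty}(\Omega)}$ by $\|u_n\|_{L^p(\Omega)}$ through $L^p$ elliptic regularity (the paper denotes this constant $c_p$ from Definition \ref{dc}), and closing by comparison with the ODE (\ref{ode1}) via Lemma \ref{l2.1}. Your explicit observation that $0\le u_n/(1+(u_n)_+/n)\le u_n$ makes the gradient estimate uniform in $n$ is left implicit in the paper, but it is the same argument.
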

   \begin{proof}
  We multiply by $u^{p-1} $ for $p>N$ and integrate by parts to obtain 
  
   \begin{equation}
 \label{2.2} \begin{array}{lcll} \displaystyle 
\frac{d}{dt} \frac{1}{p}  \int_{\Omega} [ u_n]^p dx &+  & \displaystyle  (p-1) \int_{\Omega} \gamma(v)  u^{p-2} | \nabla u_n |^{2}  dx &   \\ [4mm] 
\displaystyle & +& \displaystyle (p-1)  \int_{\Omega} \gamma^{\prime} (v_n) u_n^{p-1} \nabla v_n  \nabla u_n dx
&    \displaystyle
=  \mu    \int_{\Omega} u_n^{p}   (1-u_n) dx,
 \end{array} 
\end{equation} 
   since 
     $$  \begin{array}{rcl} \displaystyle 
    (p-1)  \int_{\Omega} \gamma(v) u_n^{p-2} | \nabla u_n|^2 dx & = & \displaystyle       \frac{4(p-1)}{p^2}  \int_{\Omega} \gamma(v) | \nabla u_n^{\frac{p}{2}} |^2 dx
    \\ [4mm]   \displaystyle 
 \mu    \int_{\Omega} u_n^p    (1-u_n) dx & \leq & \displaystyle  \mu |\Omega| 
      \\ [4mm]   \displaystyle 
   (p-1)  \int_{\Omega} \gamma^{\prime} (v_n) u_n^{p-1} \nabla v_n  \nabla u_n dx & \leq & \displaystyle \epsilon  \int_{\Omega} \gamma(v) | \nabla u_n^{\frac{p}{2}} |^2 dx
     \\ [4mm]   & + &  \displaystyle c(\epsilon)  \int_{\Omega} \frac{
   | \gamma^{\prime}(v_n) |^2}{\gamma(v_n)}  |\nabla v_n|^2 u_n^p  dx
     \\ [4mm]   \displaystyle  \int_{\Omega} \frac{| \gamma^{\prime}(v_n)|^2}{\gamma(v_n)}  |\nabla v_n|^2 u_n^p  dx
& \leq & \displaystyle \left\| \frac{| \gamma^{\prime}(v_n)|^2}{\gamma(v_n)} \right\|_{L^{\infty}( \Omega)} 
  \|\nabla v_n\|_{L^{\infty}(\Omega) }^{2}   
\left[ \int_{\Omega}  u_n^{p}  dx\right]
     \\ [4mm]  
& \leq & \displaystyle \left\| \frac{| \gamma^{\prime}(v_n)|^2}{\gamma(v_n)} \right\|_{L^{\infty}( \Omega)} 
c_{p}^2
\left[ \int_{\Omega}  u_n^{p}  dx\right]^{\frac{p+2}{p}}
     \end{array} 
$$
where $c_{p}$ has been defined in (\ref{dc}).
   Then, (\ref{2.2}) becomes 
$$
\frac{d}{dt} \frac{1}{p}  \int_{\Omega} [ u_n]^p dx \leq \mu |\Omega|  + c^2_{p}    \left\| \frac{|\gamma^{\prime} (v_n) |^2}{\gamma(v_n)} \right\|_{L^{\infty}(\Omega)}
\left[ \int_{\Omega}  u_n^{p}  dx\right]^{\frac{p+2}{p}}
$$
Thanks to assumption (\ref{H4}) it becomes 
$$
\frac{d}{dt} \frac{1}{p}  \int_{\Omega} [ u_n]^p dx \leq \mu |\Omega|  + c^2_{p}    c_{\gamma} \left[ \int_{\Omega}  u_n^{p}  dx\right]^{\frac{p+2}{p}}
$$
In view of Lemma \ref{l2.1}, standard comparison methods prove  the existence of a positive  $T_{p}>0$ such that $ \int_{\Omega} [u_n]^p dx $ is bounded for any $t<T_p$.  
   \end{proof}
     \begin{lemma} \label{l2.5}
Let  $T_*:=T_{\max\{ 4,N+1\} }$ for $T_{\max\{ 4,N+1\} } $  defined in Lemma \ref{l2.1}, then, for any $t<T_*$  we have that 
  $$\|\nabla v_n \|_{L^{\infty}}( \Omega)\leq c$$
   for any $t<T_*$.
   \end{lemma}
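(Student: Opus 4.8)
The plan is to read off the bound on $\nabla v_n$ from elliptic regularity for the second equation in (\ref{2.1}), once its right-hand side is controlled in a sufficiently high $L^p$. First I would set $f_n := u_n/(1+(u_n)_+/n)$ and observe that, by Lemma \ref{l2.2}, $u_n \geq 0$, so $(u_n)_+ = u_n$ and $0 \leq f_n \leq u_n$ pointwise. The naive bound $f_n \leq n$ is useless here since it degenerates as $n \to \infty$; the point is rather that $\|f_n\|_{L^p(\Omega)} \leq \|u_n\|_{L^p(\Omega)}$, which by Lemma \ref{l2.4} is bounded by a constant independent of $n$ for every $t < T_* = T_{\max\{4,N+1\}}$, taking $p = \max\{4,N+1\}$.

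Next I would invoke elliptic $W^{2,p}$-regularity for the Neumann problem $-\Delta v_n + v_n = f_n$, $\partial v_n/\partial \overrightarrow{n} = 0$: since $f_n \in L^p(\Omega)$ uniformly, standard $L^p$ elliptic theory (Agmon--Douglis--Nirenberg) yields $\|v_n\|_{W^{2,p}(\Omega)} \leq C_p\,\|f_n\|_{L^p(\Omega)} \leq c$, with $C_p$ and $c$ independent of $n$ on $(0,T_*)$.

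Finally, since the exponent satisfies $p \geq N+1 > N$, the Sobolev embedding $W^{2,p}(\Omega) \hookrightarrow C^1(\overline{\Omega})$ holds, and therefore $\|\nabla v_n\|_{L^\infty(\Omega)} \leq C\,\|v_n\|_{W^{2,p}(\Omega)} \leq c$ uniformly in $n$ for $t < T_*$, which is the claim. This is, incidentally, exactly the gradient bound already invoked inside the proof of Lemma \ref{l2.4}, there written through the constant $c_p$ of Definition \ref{dc}.

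As for the main obstacle: there is essentially no analytic difficulty, the statement being a direct consequence of the preceding $L^p$ estimate, and the only things to watch are bookkeeping points. One must make sure the constant is independent of $n$ (which is precisely the content of the uniform-in-$n$ bound in Lemma \ref{l2.4}), and one must exploit the structural inequality $f_n \leq u_n$ rather than the $n$-dependent pointwise bound $f_n \leq n$. The precise choice $p = \max\{4,N+1\}$ in the definition of $T_*$ is what guarantees $p > N$, i.e.\ the threshold needed for $W^{2,p} \hookrightarrow W^{1,\infty}$; this is exactly why the \emph{gradient}, and not merely $v_n$ itself, can be controlled in $L^\infty(\Omega)$.
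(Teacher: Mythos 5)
Your proof is correct and follows essentially the same route as the paper: the uniform-in-$n$ $L^p$ bound on $u_n$ (hence on the right-hand side $f_n \leq u_n$) for $p = \max\{4,N+1\} > N$, elliptic $W^{2,p}$ regularity for the Neumann problem, and the embedding $W^{2,p}(\Omega) \hookrightarrow W^{1,\infty}(\Omega)$. Your version is in fact slightly more careful than the paper's, which cites Lemma \ref{l2.3} (the $L^1$ bound, evidently a slip for the $L^p$ bound of Lemma \ref{l2.4}) and does not spell out the uniformity in $n$ or the pointwise inequality $f_n \leq u_n$ that you rightly flag.
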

   \begin{proof}
Thanks to Lemma \ref{l2.3} and Theorem 8.31 in  Gilbard and Trudinger \cite{gt}, we have that 
$$ v_n \in W^{2,q}( \Omega)$$
for any $q>N$, the Sobolev embedding  $W^{2,q} (\Omega) \hookrightarrow W^{1, \infty}( \Omega)$  proves the result. 
\end{proof} 
   \begin{lemma} \label{l2.6}
Let $p\geq \max\{4,N+1\} $ and  $T_{*}$ be  defined in Lemma \ref{l2.5},    then, for any $t<T_{*}$  we have that 
  $$\| u_n \|_{L^{\infty}( \Omega)}\leq c.$$
   \end{lemma}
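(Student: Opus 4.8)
The plan is to upgrade the $L^p$ bound of Lemma \ref{l2.4} to an $L^\infty$ bound by a Moser-type iteration, now that Lemma \ref{l2.5} furnishes control of $\nabla v_n$ that is uniform in $n$ for $t<T_*$. First I would record the coefficient bounds that turn the first equation of (\ref{2.1}) into a uniformly parabolic equation in divergence form,
$$u_{nt} - \nabla\cdot\big(\gamma(v_n)\nabla u_n\big) = -\nabla\cdot\big(u_n\,\gamma^{\prime}(v_n)\nabla v_n\big) + \mu u_n - \mu u_n^2.$$
Since $v_n$ is bounded in $W^{1,\infty}(\Omega)$ uniformly in $n$ (Lemma \ref{l2.5}, together with the $L^1$ bound of Lemma \ref{l2.3}, which controls $\int_\Omega v_n$ and hence $\|v_n\|_{L^\infty}$), and since $\gamma$ is continuous, positive and nonincreasing by (\ref{H1})--(\ref{H2}), there are constants $0<\gamma_{\min}\le\gamma_{\max}$ and $c_b$, independent of $n$ and of $t<T_*$, with $\gamma_{\min}\le\gamma(v_n)\le\gamma_{\max}$ and $|\gamma^{\prime}(v_n)\nabla v_n|\le c_b$. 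Moreover the quadratic term $-\mu u_n^2$ has the favourable sign, by Lemma \ref{l2.2}.

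Next I would run the energy iteration. Testing the equation with $u_n^{p-1}$ and arguing as in Lemma \ref{l2.4}, but now estimating the drift by $\|\nabla v_n\|_{L^\infty}$ (via (\ref{H4}) and Lemma \ref{l2.5}) instead of the elliptic constant $c_p$, Young's inequality with a $p$-dependent parameter absorbs the cross term into the diffusion and yields, for every $p\ge p_0:=\max\{4,N+1\}$,
$$\frac{d}{dt}\int_{\Omega}u_n^p\,dx + \gamma_{\min}\int_{\Omega}\big|\nabla u_n^{p/2}\big|^2\,dx \;\le\; C p^2\int_{\Omega}u_n^p\,dx,$$
with $C>0$ independent of $n$, of $p$ and of $t<T_*$ (the dissipative term $-\mu\int u_n^{p+1}$ is simply discarded). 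Setting $z=u_n^{p/2}$, so that $\int_\Omega u_n^p=\|z\|_{L^2}^2$ and $\|z\|_{L^1}^2=\|u_n\|_{L^{p/2}}^{p}$, I would interpolate $\|z\|_{L^2}$ between $\|z\|_{L^1}$ and $\|z\|_{H^1}$ by Gagliardo--Nirenberg and convert the above into a recursive inequality bounding $\sup_{t<T_*}\|u_n\|_{L^p}$ in terms of $\sup_{t<T_*}\|u_n\|_{L^{p/2}}$ and $\|u_{n0}\|_{L^\infty}$, with a constant growing only polynomially in $p$.

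Finally I would iterate along $p_k=2^k p_0$. The polynomial growth of the constants makes the resulting product $\prod_k C_k^{1/p_k}$ (with $C_k$ polynomial in $p_k$) converge, so that $\sup_{t<T_*}\|u_n\|_{L^\infty}$ is bounded by a constant multiple of $\sup_{t<T_*}\|u_n\|_{L^{p_0}}$ and $\|u_{n0}\|_{L^\infty}$. Both quantities are finite and uniform in $n$: the former by Lemma \ref{l2.4} (note that $T_{p_0}=T_*$) together with Lemma \ref{l2.3}, and the latter because $u_{n0}$ approximates $u_0\in C^{2,\alpha}(\overline{\Omega})$ from (\ref{H5}). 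This gives the asserted uniform $L^\infty$ bound for $t<T_*$.

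I expect the main obstacle to be the bookkeeping of the iteration: one must check that the constant in the energy inequality grows at most polynomially in $p$ (which forces the $p$-dependent choice of the Young parameter), and that the initial-data contribution and the passage from the time-integrated estimate to $\sup_t$ are organized so that the iterated constant remains uniform in $n$. Alternatively, one could invoke a standard $L^\infty$ estimate for divergence-form parabolic equations with bounded measurable coefficients (De Giorgi--Nash--Moser, or Ladyzhenskaya--Solonnikov--Ural'tseva theory) applied to the rewritten equation above, which bypasses the explicit iteration at the cost of citing external regularity theory.
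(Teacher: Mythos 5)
Your proof is correct in substance but follows a genuinely different route from the paper's. The paper also tests with $u_n^{p-1}$, but it does not iterate: after absorbing the cross term into the diffusion via (\ref{H4}) it discards the dissipation entirely, bounds $\|\nabla v_n\|_{L^\infty(\Omega)}^2$ by $c_{N+1}^2\bigl[\int_\Omega u_n^{N+1}\,dx\bigr]^{2/(N+1)}$ (Definition \ref{dc} and Lemma \ref{l2.4}), applies Gronwall to $\frac{1}{p}\int_\Omega u_n^p\,dx$, takes $p$-th roots, and lets $p\to\infty$ directly, relying on the Gronwall factor being uniform in $p$ --- so no lower bound on $\gamma$, no Gagliardo--Nirenberg, no iteration. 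Your Moser--Alikakos scheme (or the De Giorgi--Nash--Moser citation) instead requires uniform parabolicity $\gamma(v_n)\ge\gamma_{\min}>0$, an ingredient the paper never needs: since $\gamma$ may vanish at infinity (e.g.\ $\gamma(s)=e^{-\alpha s}$), such a $\gamma_{\min}$ exists only once $\|v_n\|_{L^\infty(\Omega)}$ is bounded uniformly in $n$, and your parenthetical justification is imprecise as stated --- the $L^1$ bound of Lemma \ref{l2.3} alone does not control $\|v_n\|_{L^\infty(\Omega)}$ when $N\ge 2$; you must combine the bounded average with the gradient bound of Lemma \ref{l2.5} (bounded mean plus bounded gradient on a bounded connected domain), or invoke $\|v_n\|_{L^\infty(\Omega)}\le c_p\|u_n\|_{L^p(\Omega)}$ from Definition \ref{dc} together with Lemma \ref{l2.4} for $p>N/2$. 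With that repaired, your argument closes along entirely standard lines. As for what each approach buys: the paper's shortcut is much shorter, but it is delicate --- its differential inequality bounds $\frac{d}{dt}\frac{1}{p}\int_\Omega u_n^p\,dx$ by a multiple of $\int_\Omega u_n^p\,dx = p\cdot\frac{1}{p}\int_\Omega u_n^p\,dx$, so the honest Gronwall exponent carries an extra factor of $p$ that taking $p$-th roots does not cancel; your iteration, with its polynomial-in-$p$ constants and convergent product $\prod_k C_k^{1/p_k}$, is precisely the standard device built to absorb such $p$-growth, which makes it the more robust (arguably safer) of the two proofs, at the cost of the extra uniform-parabolicity step and the bookkeeping you yourself flag.
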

   \begin{proof}
 As in Lemma \ref{l2.4} we  take  $u^{p-1}$ as test function in the weak formulation of problema (\ref{2.1}) to get, after some computations 
$$
  \begin{array}{lcl} \displaystyle 
\frac{d}{dt} \frac{1}{p}  \int_{\Omega} [ u_n]^p dx &\leq & \displaystyle \frac{(p-1)  c^2_{\gamma}}{2} 
  \int_{\Omega} |\nabla v_n  |^2u_n^p dx+c
  \\ [4mm] 
&  \leq &  
 \displaystyle   \frac{(p-1) c^2_{\gamma}}{2} 
c_{N+1}^2  \left[ \int_{\Omega}  u_n^{N+1}  dx \right]^{\frac{2}{N+1}}
  \int_{\Omega}  u_n^{p}  dx  +c.
     \end{array}
$$
Since $ \int_{\Omega}  u_n^{N+1}  dx$ is bounded for any $t<T_{*}$ we have thanks to Gronwall$^{\prime}$s lemma that 
$$  \frac{1}{p}\int_{\Omega} [ u_n]^p dx \leq  exp\{ \int_{0}^t   \frac{(p-1) c^2_{\gamma}}{2} 
c_{N+1}^2  \left[ \int_{\Omega}  u_n^{N+1}  dx \right]^{\frac{2}{N+1}} \}   \frac{1}{p}\int_{\Omega} [ u_0]^p dx+ c
$$
which proves, after taking p-roots in the previous inequality that 
$$ \| u_n \|_{L^p(\Omega)}  \leq    exp\{ \int_{0}^t   \frac{ c^2_{\gamma}}{2} 
c_{N+1}^2  \left[ \int_{\Omega}  u_n^{N+1}  dx \right]^{\frac{2}{N+1}} \} c $$
for some $c$ independent of $p$. 
We take limits when $p $ goes to $+ \infty$ for any $t<T_{*}$ to end the proof.  
\end{proof}

    \begin{lemma} \label{l2.7}
Let  $T_{*}$ be defined in Lemma \ref{l2.5} then, for any $T<T_*$  we have that 
  $$\int_{\Omega} u_n^2(T)dx +  \int_0^T\int_{\Omega} |\nabla u_n|^2dx dt \leq  c(T)$$
  and 
  $$ \int_0^T\int_{\Omega} |u_{nt}|^2  dx \leq c(T).$$
   \end{lemma}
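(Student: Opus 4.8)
The plan is to obtain both bounds by energy estimates, exploiting the $L^\infty$ controls on $u_n$ and $\nabla v_n$ already supplied by Lemmas \ref{l2.5} and \ref{l2.6}. First I would record the preliminary facts that make the argument work. Since $u_n$ is bounded in $L^\infty(\Omega)$ uniformly for $t<T_*$, the elliptic equation in (\ref{2.1}) gives $0\le v_n\le M$ for some constant $M$; as $\gamma>0$ and $\gamma'\le0$, this yields a uniform strictly positive lower bound $\gamma(v_n)\ge\gamma(M)=:\delta>0$, while $\gamma'(v_n),\gamma''(v_n)$ are bounded and $\Delta v_n=v_n-\tfrac{u_n}{1+(u_n)_+/n}$ is bounded in $L^\infty(\Omega)$. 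These are the non-degeneracy and coefficient bounds that the whole estimate rests on.

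For the first estimate I would test the equation with $u_n$, exactly as in Lemmas \ref{l2.2} and \ref{l2.4}, to get
$$\frac{d}{dt}\frac12\int_\Omega u_n^2\,dx+\int_\Omega\gamma(v_n)|\nabla u_n|^2\,dx+\int_\Omega\gamma'(v_n)u_n\nabla v_n\cdot\nabla u_n\,dx=\mu\int_\Omega u_n^2(1-(u_n)_+)\,dx.$$
The cross term is absorbed by Young's inequality, bounding it by $\epsilon\int_\Omega\gamma(v_n)|\nabla u_n|^2\,dx+c(\epsilon)\int_\Omega\frac{|\gamma'(v_n)|^2}{\gamma(v_n)}u_n^2|\nabla v_n|^2\,dx$, where (\ref{H4}) and the $L^\infty$ bounds on $u_n$ and $\nabla v_n$ render the last integral $\le c$; the logistic term is $\le\mu\int_\Omega u_n^2\,dx\le c$ since $u_n\ge0$. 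Integrating in time and using $\gamma(v_n)\ge\delta$ together with $u_{n0}\in L^2(\Omega)$ then gives $\int_\Omega u_n^2(T)\,dx+\delta(1-\epsilon)\int_0^T\int_\Omega|\nabla u_n|^2\,dx\,dt\le c(T)$, which is the first claim.

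For the $u_{nt}$ bound I would first upgrade to an $H^2$ estimate. Writing the equation as $u_{nt}-\gamma(v_n)\Delta u_n=g_n$ with $g_n:=2\gamma'(v_n)\nabla v_n\cdot\nabla u_n+u_n\Delta\gamma(v_n)+\mu u_n(1-(u_n)_+)$, the preliminary bounds give $\int_\Omega g_n^2\,dx\le c\int_\Omega|\nabla u_n|^2\,dx+c$. Testing with $-\Delta u_n$, the boundary terms vanishing by the Neumann condition, produces
$$\frac{d}{dt}\frac12\int_\Omega|\nabla u_n|^2\,dx+\int_\Omega\gamma(v_n)|\Delta u_n|^2\,dx=-\int_\Omega g_n\Delta u_n\,dx\le\epsilon\int_\Omega\gamma(v_n)|\Delta u_n|^2\,dx+\frac{c}{\delta}\int_\Omega g_n^2\,dx.$$
Integrating in time and using the first estimate to control $\int_0^T\int_\Omega g_n^2$ yields $\int_0^T\int_\Omega|\Delta u_n|^2\,dx\,dt\le c(T)$. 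Finally, reading off $u_{nt}=\gamma(v_n)\Delta u_n+g_n$ and squaring gives $\int_0^T\int_\Omega u_{nt}^2\,dx\,dt\le c\int_0^T\int_\Omega|\Delta u_n|^2\,dx\,dt+c\int_0^T\int_\Omega g_n^2\,dx\,dt\le c(T)$.

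I expect the genuine structural point to be securing the strictly positive lower bound $\gamma(v_n)\ge\delta$: it is what makes the parabolic operator non-degenerate and what turns $\int_\Omega\gamma(v_n)|\Delta u_n|^2$ into a usable $H^2$ bound, and it relies essentially on $\gamma>0$ (the abstract hypothesis, slightly stronger than (\ref{H1})) together with the $L^\infty$ bound on $v_n$. The remaining difficulty is only one of rigor: since $u_n$ is a weak solution, the choices $u_n$ and $-\Delta u_n$ as test functions are formal and would properly be justified on a Galerkin (or time-regularized) approximation, the uniform bounds above then passing to the limit.
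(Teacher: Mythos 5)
Your proposal is correct and follows essentially the same route as the paper: test with $u_n$ (absorbing the cross term via Young's inequality and (\ref{H4})) for the first bound, then test with $\Delta u_n$ to obtain $\int_0^T\int_\Omega|\Delta u_n|^2\,dx\,dt\le c(T)$, and finally deduce the $u_{nt}$ bound from the $H^2$ control. Your one deviation at the last step --- reading $u_{nt}=\gamma(v_n)\Delta u_n+g_n$ directly off the equation and squaring, instead of the paper's additional multiplication by $u_{nt}$ --- is a harmless (in fact cleaner) shortcut, and your explicit remarks on the non-degeneracy $\gamma(v_n)\ge\delta>0$ and on justifying the test functions make precise steps the paper leaves implicit.
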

   \begin{proof}
We take $u_n$ as test function in the weak formulation of (\ref{defun}) to obtain 
$$
\begin{array}{lcl} \displaystyle  \int_{\Omega} u_n^2(T)dx + c_0 \int_0^T\int_{\Omega} |\nabla u_n|^2dx dt &  \leq & 
\displaystyle 
 \int_0^T\int_{\Omega} \gamma^{\prime}(v_n) u_n \nabla u_n \nabla v_n dxdt 
 \\ [4mm] 
 &+& \displaystyle  \mu \int_0^T \int_{\Omega}   
u_n^2(1-u_n) dxdt +c.
\end{array} 
$$
Since 
$$\begin{array}{lcl} \displaystyle  \int_0^T\int_{\Omega} \gamma^{\prime}(v_n) u_n \nabla u_n \nabla v_n dxdt
& \leq & \epsilon  \int_0^T\int_{\Omega} |\nabla u_n|^2dx dt
\\ [4mm] 
&+ & \displaystyle  \int_0^T\int_{\Omega} 
|\gamma^{\prime} (v_n)|^2u_n^2 |\nabla v_n|^2dx dt  
\end{array} $$
we have that 
$$\int_{\Omega} u_n^2dx +  \int_0^T\int_{\Omega} |\nabla u_n|^2dx dt \leq  c(T),$$
which proves the first part of the theorem. To prove the second part we  take  $\Delta u_{n}$ in the weak formulation to get  
 $$\begin{array}{rcl} \displaystyle  \left.\int_{\Omega} |\nabla u_{n}|^2dx \right \vert_0^T & + & 
  \displaystyle   \int_0^T\int_{\Omega} \gamma(v_n) |\Delta  u_n|^2dxdt  \leq  \int_0^T\int_{\Omega} u_n|\gamma^{\prime   }(v_n)| \Delta v_n| | \Delta u_n |
 dx dt \\ [4mm] &+& \displaystyle 
  \int_0^T\int_{\Omega} \left[ |\gamma^{\prime   }(v_n)| | \nabla v_n| | \nabla u_n |+ \gamma^{\prime \prime   }(v_n)| \nabla  v_n| | \nabla u_n  | \right] |\Delta u_n |
dxdt 
\\ [4mm] &+&  \displaystyle  \mu \int_{0}^T\int_{\Omega}   
(u_n+u_n^2) |\Delta u_n|dxdt  . \end{array}$$
Since $$\int_{\Omega} |\Delta v_n|^p dx \leq c(T)$$
we get 
 $$ \left.\int_{\Omega} |\nabla u_{n}|^2dx \right\vert_0^T+
    \int_0^T\int_{\Omega}   |\Delta  u_n|^2dxdt  \leq  c(T) .$$
We multiply now by $u_{nt}$ and, in view of 
$$\begin{array}{lcl}  \displaystyle \int_{\Omega} u_t \Delta \gamma(v) u dx  & \leq &\displaystyle \epsilon  \int_{\Omega} |u_{nt} |^2dx
\\ [4mm] &+ & \displaystyle  c \int_{\Omega} |\Delta u_{nt} |^2dx + c \int_{\Omega} |\Delta v_{nt} |^2dx
+c \int_{\Omega} |\nabla u_{nt} |^2 |\nabla v_n|^2dx \end{array} $$
and previous lemmas we get that 
$$ \int_0^T\int_{\Omega} |u_{nt}|^2  dx \leq c$$
which ends the proof. 
\end{proof} 
\begin{lemma} \label{l2.8} Let $T<T_*$ small enough, then, 
   there exists a unique weak solution $(u_n,v_n)$ to (\ref{2.1}) in the sense of definition \ref{defun} such that 
   $$ u_n \in  L^2(0,T:H^2(\Omega)) \cap L^{\infty}(0,T: H^1(\Omega)), \qquad v\in C^0( 0,T: C^1(\overline{\Omega})) .$$
\end{lemma}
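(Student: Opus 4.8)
The plan is to construct $(u_n,v_n)$ on a short interval $[0,T]$ with $T<T_*$ by a Banach fixed-point argument, the decisive structural feature being that the $n$-truncation of the elliptic source regularizes $v_n$ uniformly. Fix $R>\|u_{n0}\|_{L^\infty(\Omega)}$ and set
\[
B_R:=\Big\{\bar u\in C([0,T];L^\infty(\Omega)):\ \sup_{t\in[0,T]}\|\bar u(t)\|_{L^\infty(\Omega)}\le R\Big\},
\]
which I regard as a complete metric space for the (weaker) metric induced by $C([0,T];L^2(\Omega))$, since the $L^\infty$-bound passes to $L^2$-limits. Given $\bar u\in B_R$, I first solve $-\Delta v+v=\bar u/(1+(\bar u)_+/n)$ with homogeneous Neumann data. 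Because $s\mapsto s/(1+s_+/n)$ is globally Lipschitz with constant $1$ and maps $B_R$ into an $L^\infty$-bounded set (with bound $\max\{R,n\}$), the source lies in $L^\infty(\Omega)$ and elliptic regularity (Theorem 8.31 in \cite{gt}, as in Lemma \ref{l2.5}) yields $v\in C([0,T];W^{2,q}(\Omega))\hookrightarrow C([0,T];C^1(\overline\Omega))$ for every $q<\infty$; in particular $\nabla v\in L^\infty$, $\Delta v\in L^q$, the functions $\gamma(v)$, $\gamma'(v)$ and $\gamma''(v)$ are bounded, and $\gamma(v)\ge c_0>0$ on the bounded range of $v$.

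Next I define $\cF(\bar u):=u$, where $u$ solves the linear parabolic problem
\[
u_t-\Delta(\gamma(v)u)=\mu\,\bar u\,(1-(\bar u)_+),\qquad \frac{\partial u}{\partial\overrightarrow n}=0,\qquad u(0)=u_{n0}.
\]
Writing $\Delta(\gamma(v)u)=\gamma(v)\Delta u+2\gamma'(v)\nabla v\cdot\nabla u+(\gamma''(v)|\nabla v|^2+\gamma'(v)\Delta v)\,u$ exhibits this as a uniformly parabolic linear equation with principal coefficient $\gamma(v)\ge c_0$, first-order coefficient in $L^\infty$, zeroth-order coefficient in $L^q$ and source in $L^\infty((0,T)\times\Omega)$; a Galerkin approximation together with the energy identities already used in Lemmas \ref{l2.4}, \ref{l2.6} and \ref{l2.7} then produces a unique $u\in L^2(0,T;H^2(\Omega))\cap L^\infty(0,T;H^1(\Omega))\cap H^1(0,T;L^2(\Omega))$. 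The $L^\infty$ estimate of Lemma \ref{l2.6}, applied to this linear problem, shows $\sup_{[0,T]}\|u\|_{L^\infty}\le R$ provided $T$ is small, so that $\cF$ maps $B_R$ into itself.

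It remains to check that $\cF$ is a contraction for $T$ small. For $\bar u_1,\bar u_2\in B_R$, with associated $v_i$ and $u_i=\cF(\bar u_i)$, the Lipschitz property of the truncation and elliptic regularity give $\sup_{[0,T]}\|v_1-v_2\|_{W^{2,q}(\Omega)}\le C\sup_{[0,T]}\|\bar u_1-\bar u_2\|_{L^2(\Omega)}$. Subtracting the two parabolic equations, testing the difference $w:=u_1-u_2$ against itself, and using Young's inequality to absorb into the dissipation the terms arising from $\gamma(v_1)-\gamma(v_2)$, from $\nabla v_i$ and $\Delta v_i$, and from the reaction, I expect
\[
\sup_{t\in[0,T]}\|w(t)\|_{L^2(\Omega)}^2\le C(T)\,\sup_{t\in[0,T]}\|\bar u_1-\bar u_2\|_{L^2(\Omega)}^2,\qquad C(T)\to0\ \text{as}\ T\to0 .
\]
Hence $\cF$ is a contraction on $B_R$ equipped with the $C([0,T];L^2(\Omega))$ metric for $T$ small, and its unique fixed point $u_n$ satisfies $\bar u=u_n$, i.e. the full nonlinear system; setting $v_n:=v[u_n]$ gives $v_n\in C(0,T;C^1(\overline\Omega))$ and $u_n$ in the stated regularity class.

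The main obstacle is the contraction estimate, where the quasilinear structure of the diffusion forces one to control the cross terms $2\gamma'(v)\nabla v\cdot\nabla w$ and $(\gamma''(v)|\nabla v|^2+\gamma'(v)\Delta v)\,w$, together with $\Delta\big((\gamma(v_1)-\gamma(v_2))u_2\big)$, in the $L^2$ energy identity for $w$ — precisely the type of terms handled in Lemma \ref{l2.7}. The point that makes everything work is that the truncation keeps the elliptic source in $L^\infty$ uniformly over $B_R$, so that $\nabla v_i\in L^\infty$ and $\Delta v_i\in L^q$ with bounds depending only on $n$ and $R$, while $\gamma\in C^3$ keeps $\gamma'$, $\gamma''$, $\gamma'''$ bounded on the bounded range of the $v_i$; this is what yields a prefactor $C(T)$ that vanishes with $T$ rather than growing with $\|u_i\|_{H^1}$, and thus both existence and uniqueness of the approximate solution.
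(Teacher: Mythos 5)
Your overall architecture (freeze $\bar u$, solve the truncated elliptic problem, solve a linear parabolic problem, fixed point) matches the paper's, but you replace the paper's compactness argument by a Banach contraction, and the contraction step --- which you yourself flag as the main obstacle and only say you ``expect'' --- contains a genuine gap. Your claimed Lipschitz estimate $\sup_{[0,T]}\|v_1-v_2\|_{W^{2,q}(\Omega)}\le C\sup_{[0,T]}\|\bar u_1-\bar u_2\|_{L^2(\Omega)}$ is false for $q>2$: elliptic regularity maps an $L^2$ source only into $H^2(\Omega)$, and $W^{2,q}$ control requires $L^q$ control of the difference of sources, which the $C([0,T];L^2)$ metric does not provide. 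Consequently the terms $\bigl(\gamma(v_1)-\gamma(v_2)\bigr)\nabla u_2$ and $u_2\bigl(\gamma'(v_1)\nabla v_1-\gamma'(v_2)\nabla v_2\bigr)$ in the energy identity for $w=u_1-u_2$ can be bounded by $\|\bar u_1-\bar u_2\|_{L^2}$ only when $H^2(\Omega)\hookrightarrow L^\infty(\Omega)$, i.e.\ $N\le 3$, whereas the lemma is stated in arbitrary dimension. Interpolating with the uniform $L^\infty$ bound $2R$ rescues an estimate of the form $\|v_1-v_2\|_{L^\infty}\le C\,\|\bar u_1-\bar u_2\|_{L^2}^{2/p}$ with $2/p<1$, but a H\"older modulus destroys the strict contraction in your stated metric. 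The natural repair is to run the Banach scheme in $C([0,T];L^p(\Omega))$ with $p>\max\{N,2\}$: the truncation $s\mapsto s/(1+s_+/n)$ is still $1$-Lipschitz from $L^p$ to $L^p$, elliptic regularity then gives $\|v_1-v_2\|_{W^{2,p}}\le C\|\bar u_1-\bar u_2\|_{L^p}$ with $W^{2,p}(\Omega)\hookrightarrow C^1(\overline\Omega)$, and an $L^p$ energy estimate for $w$ closes the loop; but as written your argument does not go through. A secondary looseness: you invoke Lemma \ref{l2.6} for the invariance $\cF(B_R)\subseteq B_R$, but that lemma concerns the nonlinear system and rests on Lemma \ref{l2.4}; for your frozen linear problem, whose zeroth-order coefficient $\gamma''(v)|\nabla v|^2+\gamma'(v)\Delta v$ lies only in $L^q$, you need instead a parabolic maximum-principle or De Giorgi-type $L^\infty$ bound, which does hold for $q$ large but should be cited as such.

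For comparison, the paper avoids all of this by working in H\"older spaces and using compactness rather than contraction: given $\tilde u_n\in C^{\beta/2,\beta}_{t,x}$, Lax--Milgram plus Theorem 8.34 of \cite{gt} give $v_n\in C^{\beta/2,1+\beta}_{t,x}$, then Lieberman \cite{lieberman} (Theorem 4.30) gives $u_n\in C^{1+\beta/2,2+\beta}_{t,x}$ for the linear parabolic problem, and the gain of regularity yields the compactness needed for Schauder's fixed point theorem on a short interval; the solution is then extended to all of $(0,T_*)$ using that the truncated source is bounded by $n$ in $L^\infty$, and uniqueness is established by a separate standard argument as in \cite{tw1}. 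Your Banach route would be a legitimate and arguably cleaner alternative --- it delivers existence and uniqueness simultaneously and avoids Schauder estimates --- but only once the contraction estimate is actually carried out in a norm strong enough to control $v_1-v_2$ in $C^1(\overline\Omega)$, which your $C([0,T];L^2)$ metric is not.
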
 
\begin{proof} We consider a fixed point argument, for a given $\tilde{u}_n\in C^{ \frac{\beta}{2}, \beta}_{t,x}([0,T] \times \overline{\Omega})$, $\tilde{u}_n \geq 0$,  
we consider the solution  $v_n$ to the problem 
\begin{equation} \label{vn}- \Delta v_n+v_n=\frac{\tilde{u}_n}{1+ \frac{\tilde{u}_n}{n}} \end{equation} 
with Neumann boundary condition. 
Then, we consider $u_n$ the solution to the parabolic problem 
\begin{equation} \label{unfijo} \left\{ \begin{array}{l} u_{nt}- div( \gamma(v_n) \nabla u_n)= -div ( u_n \gamma^{\prime}(v_n)\nabla v_n) + \mu u_n(1- \tilde{u}_n),
\\ [4mm]  u(0,x)=u_0(x), 
\end{array} \right.
\end{equation} 
with Neumann boundary conditions.   Lax - Milgram theorem proves the existence of a unique solution $v_n$ to  (\ref{vn}),  
thanks to Gilbart and Trudinger \cite{gt} Theorem 8.34 we have that
 $v_n \in C^{ \frac{\beta}{2}, 1+ \beta}_{t,x} ([0,T] \times \overline{\Omega})$. 
 We replace $v_n$ into (\ref{unfijo}) to obtain thanks to Lieberman \cite{lieberman}, Theorem 4.30, page 79 that   $u_n$ satisfies
 $u_n  \in C^{ 1+ \frac{\beta}{2}, 2+ \beta}_{t,x} ([0,T] \times \overline{\Omega})$. 
 Schauder fixed point Theorem proves the existence of solutions for $T$ small enough. Since, for any $n$,   the term $ \frac{\tilde{u}_n}{1+\frac{\tilde{u}_n}{n}}$ is bounded in $L^{\infty}[(0,T_{*}) \times  \Omega)$ we   extend the solution up to $T=T_{*}$.   
 Uniqueness is obtained by standard arguments,  we assume the existence of    two different solutions to get a contradiction.  
 The proof is similar to the proof given in   \cite{tw1}  Theorem 2.1, therefore we omit the details. 
\end{proof} 

   \begin{theorem} \label{theorem2.1} Let $T<T_*$, then, 
   there exists a unique weak solution $(u,v)$ to (\ref{1.1}) in the sense of definition \ref{defu} such that 
   $$ u \in  L^2(0,T:H^2(\Omega)) \cap L^{\infty}(0,T: H^1(\Omega)), \qquad v\in C^0( 0,T: C^1(\overline{\Omega})) .$$
   \end{theorem}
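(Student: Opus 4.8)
The plan is to obtain the solution of (\ref{1.1}) as the limit of the solutions $(u_n,v_n)$ of the approximated problem (\ref{2.1}), whose existence and uniqueness are guaranteed by Lemma \ref{l2.8}. The crucial observation is that all the a priori estimates collected in Lemmas \ref{l2.2}--\ref{l2.7} are uniform in $n$ on the interval $(0,T_*)$. In particular, since $u_n \geq 0$ by Lemma \ref{l2.2}, on each solution the positive parts satisfy $(u_n)_+ = u_n$, so that the approximated system actually reads
$$u_{nt} - \Delta(\gamma(v_n)u_n) = \mu u_n(1-u_n), \qquad -\Delta v_n + v_n = \frac{u_n}{1+ u_n/n}.$$
Gathering Lemmas \ref{l2.6} and \ref{l2.7}, the family $\{u_n\}$ is bounded, independently of $n$ and for $t<T$, in $L^\infty((0,T)\times\Omega)$, in $L^2(0,T:H^2(\Omega))\cap L^\infty(0,T:H^1(\Omega))$, while $\{u_{nt}\}$ is bounded in $L^2(0,T:L^2(\Omega))$.

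Then I would extract convergent subsequences. The uniform bounds give, up to a subsequence, $u_n \rightharpoonup u$ weakly in $L^2(0,T:H^2(\Omega))$ and $u_{nt}\rightharpoonup u_t$ weakly in $L^2(0,T:L^2(\Omega))$. Since the embedding $H^2(\Omega)\hookrightarrow H^1(\Omega)\hookrightarrow L^2(\Omega)$ is compact, the Aubin--Lions lemma yields strong convergence $u_n \to u$ in $L^2(0,T:H^1(\Omega))$ and in $C(0,T:L^2(\Omega))$, hence also almost everywhere in $(0,T)\times\Omega$ along a further subsequence. For the second component, the source $\frac{u_n}{1+u_n/n}$ is bounded in $L^\infty$ and, by the uniform $L^\infty$ bound on $u_n$, converges to $u$ (strongly in every $L^p(\Omega)$ and a.e.); elliptic regularity (Gilbarg--Trudinger \cite{gt}, Theorem 8.34) then bounds $v_n$ in $W^{2,q}(\Omega)$ for every $q$, so that $v_n \to v$ in $C^1(\overline{\Omega})$ for a.e.\ $t$, where $v$ solves $-\Delta v + v = u$.

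With these convergences at hand I would pass to the limit in the weak formulation of Definition \ref{defun}. The linear terms and the time-derivative term pass by weak convergence, while the initial and terminal contributions pass thanks to the $C(0,T:L^2(\Omega))$ convergence. The nonlinear terms are handled by combining the strong $L^2$ convergence of $u_n$ with the a.e.\ convergence and uniform boundedness: $u_n\gamma(v_n)\Delta\phi \to u\gamma(v)\Delta\phi$ because $\gamma$ is continuous and $\gamma(v_n)$ is bounded, and the quadratic logistic term $u_n^2$ converges by the same dominated-convergence argument. Since $u_n\geq 0$ and $u_n\to u$ a.e., the limit satisfies $u\geq 0$, so that $(u)_+=u$ and $(u,v)$ is a weak solution of (\ref{1.1}) in the sense of Definition \ref{defu}. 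The stated regularity $u\in L^2(0,T:H^2(\Omega))\cap L^\infty(0,T:H^1(\Omega))$ follows from the weak lower semicontinuity of the norms, and $v\in C^0(0,T:C^1(\overline{\Omega}))$ from the elliptic regularity applied to the limit equation.

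Finally, uniqueness would be obtained by a standard contradiction argument: assuming two solutions with the same data, I would subtract the equations, test with the difference $u-\hat u$, and exploit the Lipschitz dependence of $\gamma$ together with the uniform $L^\infty$ bounds to close a Gronwall estimate, exactly as in Tello--Winkler \cite{tw1}, Theorem 2.1. The main obstacle is the passage to the limit in the nonlinear terms $u_n\gamma(v_n)$ and $u_n^2$, which is precisely where the strong compactness furnished by the Aubin--Lions lemma---rather than mere weak convergence---is indispensable.
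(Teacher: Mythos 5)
Your proposal is correct and follows essentially the same route as the paper: uniform-in-$n$ bounds from Lemmas \ref{l2.2}--\ref{l2.7}, Aubin--Lions compactness with $H^2(\Omega)\hookrightarrow H^1(\Omega)\hookrightarrow L^2(\Omega)$ to pass to the limit in the nonlinear terms, elliptic regularity for the $v$-component, and uniqueness by the standard contradiction argument as in Tello--Winkler \cite{tw1}. The only cosmetic difference is that the paper secures $v_n\to v$ in $C^0(0,T:C^1(\overline{\Omega}))$ by differentiating the elliptic equation in $t$ and applying Aubin--Lions with $W^{2,p}(\Omega)\hookrightarrow C^1(\overline{\Omega})$, whereas you reach the same conclusion through the strong $C(0,T:L^2(\Omega))$ convergence of $u_n$ combined with the uniform $L^\infty$ bound and elliptic regularity for the limit equation.
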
 
   \begin{proof}
   We have that for any $T<T_*$ 
    $$\int_0^T\int_{\Omega} |u_{nt}  |^2 dx \leq c(T),  $$
     $$\int_0^T\int_{\Omega} |\Delta u_{n}  |^2 dx \leq c(T),  $$
    and 
      $$\| \nabla u_n \|_{[ L^{2}(\Omega)]^N}  \leq c(T).$$ 
      Then, $v_n$ is  bounded in $L^{\infty}(0,T: W^{2,p}(\Omega))$. Moreovere if  
      we  derivate respect to $t$ the equation of $v_n$ it results 
      $$- \Delta v_{nt} + v_{nt} =  \frac{u_{nt}}{(1+ \frac{u_n}{n})^2}$$
      we take squares in the previous inequality to get, after integration 
     $$\int_0^T\int_{\Omega} (|\Delta v_{nt}  |^2  + |u_{nt}  |^2 + |\nabla u_{nt}  |^2) dxdt \leq c(T) . $$
      Thanks to Aubin-Lions Lemma, for the spaces $H^2 (\Omega) \hookrightarrow  H^1(\Omega) \hookrightarrow L^2(\Omega)$, 
      there exists $u\in C(0,T:L^2(\Omega))$ such that 
      $$u_n\rightarrow u \quad \mbox{ strong in } L^2(0,T:H^1(\Omega))$$
        $$u_n\rightharpoonup u \quad \mbox{ weak in } L^2(0,T:H^2(\Omega))$$
      and since $W^{2,p} (\Omega) \hookrightarrow  C^1(\overline{\Omega}) \hookrightarrow L^2(\Omega),$ 
 there exists $v\in C(0,T:C^1(\overline{\Omega}))$ satisfying 
      $$v_n\rightarrow v \quad \mbox{ strong in } C^0(0,T:C^1(\overline{\Omega})).$$
      Then  
    $$\gamma(v_n ) \rightarrow \gamma(v), \quad \mbox{ strong in }  C^0( 0,T: C^1(\overline{\Omega})) $$
    and 
$$\gamma^{\prime}(v_n ) \rightarrow \gamma^{\prime}(v), \quad \mbox{ strong in }  C^0( 0,T: C^1(\overline{\Omega}) )$$
 and therefore we get the following convergence of 
 the integrals 
  $$\int_0^T\int_{\Omega} u_n \phi_t dx dt 
  \rightarrow \int_0^T\int_{\Omega} u \phi_t dx dt $$
  $$
  \int_0^T \int_{\Omega}   \gamma(v_n) \nabla u_n \nabla  \phi dxdt \rightarrow  \int_0^T \int_{\Omega}   \gamma(v_n) \nabla u_n \nabla  \phi dxdt 
  $$
  $$
   \mu \int_0^T \int_{\Omega} u_n(1-(u_n)_+) \phi dxdt \rightarrow    \mu \int_0^T \int_{\Omega} u_n(1-(u_n)_+) \phi dxdt $$
$$ \int_{\Omega} u_n(T) \phi(T)]dx \rightarrow  \int_{\Omega} u(T) \phi(T)]dx$$
  $$ \int_{\Omega} \nabla v_n \nabla \phi dx \rightarrow  \int_{\Omega} \nabla v_n \nabla \phi dx,
  $$   
   $$  \int_{\Omega}  v_n   \phi dx \rightarrow   \int_{\Omega}  v   \phi dx ,
  $$  
     $$    \int_{\Omega} 	\frac{u_n}{1+ \frac{(u_n)_+}{n}}  \phi dx \rightarrow    \int_{\Omega} 	u_n \phi dx .
  $$ 
  We take limits in (\ref{defun}) to obtain that $(u,v)$ is a weak solution of (\ref{1.1}) in the sense of definition \ref{defu}.  Since $\nabla v $ is bounded in $[0,T] \times \Omega$ for any $T<T_*$ we have that standard  parabolic regularity shows 
  $u \in L^q(0,T:W^{2,q}(\Omega)) \cap W^{1,q}(0,T:L^q(\Omega))$
  for any $q<\infty$.  
  
  To obtain uniqueness we proceed by contradiction   assuming  there are two different solutions. Standard computations   show uniqueness of solutions. Since the  proof is similar to 
  the proof of  Theorem 2.1 in Tello and Winkler \cite{tw1} therefore we omit the details. 

    \end{proof}

   \section{Auxiliary  problem } \label{s3} \setcounter{equation}{0}
   We first notice that the term $\Delta [ \gamma(v)u]$ in (\ref{1.1}) is  expressed  as follows 
$$\begin{array}{lcl} \displaystyle  -\Delta (u \gamma(v)) & = & \displaystyle -\gamma(v) \Delta u - 2\gamma^{\prime}(v) \nabla v \nabla u- u \gamma^{\prime } (v) \Delta v -u \gamma^{\prime \prime} (v)|\nabla  v|^2$$
$$\\ [2mm] & = & \displaystyle   -\gamma(v) \Delta u -2 \gamma^{\prime}(v) \nabla v \nabla u+ u \gamma^{\prime } (v)(u-v) -u \gamma^{\prime \prime} (v)|\nabla  v|^2, 
\end{array} 
$$
we replace the previous  expression into (\ref{1.1}) to obtain
$$ u_t -\gamma(v) \Delta u= 2\gamma^{\prime}(v) \nabla v \nabla u-u \gamma^{\prime } (v)(u-v)+u \gamma^{\prime \prime} (v)|\nabla  v|^2+  \mu u(1-u). 
$$
Let $a(t)$ defined by 
$$a(t):= \|\nabla v\|_{L^{\infty}(\Omega)}$$ 
and consider  the system of Ordinary Differential Equations in the time interval $[0,T]$ for some $T\in (0,T_*)$, 
\begin{equation} 
\label{3.0} \left\{ \begin{array}{rcl} \overline{u}_t & = &  \overline{u} [-\gamma^{\prime } (\underline{u})] (\overline{u} -\underline{u})+\overline{u} \gamma^{\prime \prime} (\underline{u}) a(t) +  \mu\overline{u}(1-\overline{u}), 
\\ [2mm] \underline{u}_t & = &  \underline{u}[-  \gamma^{\prime } (\underline{u})] (\underline{u} -\overline{u})  +  \mu\underline{u}(1-\underline{u}), 
\\ [2mm]  & &  \overline{u}(0)   =  \overline{u}_0,  \quad  \underline{u}(0)= \underline{u}_0.
\end{array} \right.
\end{equation} 
To simplify the previous system, we divide by $\overline{u}$  the first equation and by  $\underline{u}$ to have 
\begin{equation} \label{ODE} \left\{ \begin{array}{lcl}  \frac{d}{dt} \log( \overline{u})  & = & [- \gamma^{\prime } (\underline{u})] (\overline{u} -\underline{u})+ 
 \gamma^{\prime \prime} (\underline{u}) a(t)  +  \mu (1-\overline{u}),   
\\ 
\frac{d}{dt} \log(\underline{u}) & =  &  [- \gamma^{\prime } (\underline{u})] (\underline{u} -\overline{u})  +  \mu (1-\underline{u}),  
\\ [2mm]  & &  \overline{u}(0)   =  \overline{u}_0,  \quad  \underline{u}(0)= \underline{u}_0.
\end{array} \right. \end{equation}
\begin{lemma}   \label{l3.1} There exists $T_{0} >0$, and $C^1(0,T_{0}) $ functions  $\overline{u}$,    $ \underline{u}$ such that  $(\overline{u}, \underline{u})$  is the unique  solution to (\ref{ODE}) to (\ref{3.0}).  
\end{lemma}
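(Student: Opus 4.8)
The plan is to rewrite the system (\ref{3.0}) as a single first-order vector ODE $U' = F(t,U)$, $U = (\overline{u},\underline{u})$, posed on the open positivity region $\{\overline{u}>0,\ \underline{u}>0\}\subset \R^2$, and to apply the classical Picard--Lindel\"of (Cauchy--Lipschitz) theorem. The first step is to check that the time-dependent coefficient $a(t) = \|\nabla v\|_{L^{\infty}(\Omega)}$ is an admissible datum: by Lemma \ref{l2.5} and Theorem \ref{theorem2.1} the local solution constructed in Section \ref{s2} satisfies $v \in C^0(0,T:C^1(\overline{\Omega}))$ for every $T<T_{*}$, so $t\mapsto \nabla v(t)$ is continuous into $C^0(\overline{\Omega})$; since $w\mapsto \|\nabla w\|_{L^{\infty}(\Omega)}$ is Lipschitz on $C^1(\overline{\Omega})$, we conclude $a\in C^0([0,T])$, and in particular $a$ is continuous and bounded. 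Hence $F$ depends continuously on $t$.

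Second, I would verify the regularity of $F$ in the unknowns. Inspecting (\ref{3.0}), each term is a polynomial in $(\overline{u},\underline{u})$, possibly multiplied by $\gamma'(\underline{u})$ or by $a(t)\gamma''(\underline{u})$. By (\ref{H1}) we have $\gamma\in C^3([0,\infty))$, so $\gamma'$ and $\gamma''$ are $C^1$ and therefore locally Lipschitz on $[0,\infty)$; together with the smoothness of the polynomial factors this makes $F(t,\cdot)$ of class $C^1$, hence locally Lipschitz, uniformly for $t$ in compact subintervals of $[0,T]$. Since (\ref{H6}) gives $\overline{u}_0,\underline{u}_0>0$, the initial point lies in the open positivity region, and Picard--Lindel\"of produces a unique solution on a maximal interval $[0,T_0)$ with $T_0>0$; continuity of $F$ in $t$ upgrades this solution to $C^1$.

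Finally I would confirm that the trajectory never leaves the positivity region on $[0,T_0)$, which is what makes the logarithmic reformulation (\ref{ODE}) equivalent to (\ref{3.0}) and keeps the arguments of $\gamma'$, $\gamma''$ admissible. This is immediate from the multiplicative structure of (\ref{3.0}): along the solution the equations read $\overline{u}_t = \overline{u}\,g_1(t)$ and $\underline{u}_t = \underline{u}\,g_2(t)$ with $g_1,g_2$ continuous, so that $\overline{u}(t) = \overline{u}_0\exp\!\big(\int_0^t g_1\big)$ and $\underline{u}(t)=\underline{u}_0\exp\!\big(\int_0^t g_2\big)$ remain strictly positive. This step is the only genuinely delicate point: were $\underline{u}$ allowed to reach the boundary $\{\underline{u}=0\}$, the coefficients $\gamma'(\underline{u})$, $\gamma''(\underline{u})$ and the logarithm could lose the control underlying the fixed-point estimate; the positivity just established, combined with the continuity of $a$, rules this out and secures a genuine interval $(0,T_0)$ of existence and uniqueness.
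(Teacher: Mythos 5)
Your proof is correct and follows essentially the same route as the paper: the continuity of $a(t)$ on $[0,T_*)$ (inherited from the regularity of $v$ in Theorem \ref{theorem2.1}) together with $\gamma\in C^3([0,\infty))$ makes the right-hand side continuous in $t$ and locally Lipschitz in $(\overline{u},\underline{u})$, and classical ODE theory --- the paper invokes Peano for existence plus the Lipschitz property for uniqueness, while you invoke Picard--Lindel\"of directly --- gives a unique $C^1$ solution on a maximal interval $(0,T_0)$. Your inline exponential-representation argument for strict positivity is a clean extra step that justifies the equivalence with the logarithmic form (\ref{ODE}); the paper instead establishes positivity (and more) separately in Lemma \ref{l3.2} under the additional ordering hypothesis on the initial data.
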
 
\begin{proof}
Since $\gamma \in C^3$ and $a(t) \in C^0(0,T)$ (for $T<T_*$),   then, thanks to Peano$^{\prime}$s theorem   there exists a local solution to (\ref{ODE}). Moreover since the righthand-side part of the system is locally Lipschitz in $\overline{u}$ and $\underline{u}$  we deduce the uniqueness of solutions. We also may extend the solution to a maximal interval of existence given  by  $(0,T_{0})$ for some positive $T_{0}\leq T_*$. Since $a(t)$ is a continuous function in $[0,T_*)$ (for $T_*$ defined in 
Lemma\ref{l2.6})
 we obtain the wished regularity and conclude   the proof. 

\end{proof}
\begin{lemma} \label{l3.2}  Let $\overline{u}$ and $ \underline{u}$ the solutions to (\ref{ODE}) in $(0, T_{0})$, such that  
 $$ 0  < \underline{u}_0 < 1< \overline{u}_0, \quad \mbox{ for any }  t<T_{0} $$ then  
\begin{equation} \label{t0} 0  < \underline{u} < 1< \overline{u}. \end{equation} 
\end{lemma}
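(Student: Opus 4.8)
The plan is to derive the three strict inequalities in (\ref{t0}) from two facts: a global strict ordering $\underline{u}<\overline{u}$ on all of $(0,T_0)$, and a comparison with the scalar logistic equation. Positivity is immediate: writing (\ref{ODE}) in integrated logarithmic form, $\underline{u}(t)=\underline{u}_0\exp\big(\int_0^t(\cdots)\,ds\big)$ and likewise for $\overline{u}$, and noting that by Lemma \ref{l3.1} both functions are finite, hence the integrands bounded, on every compact subinterval of $(0,T_0)$, we obtain $\underline{u}(t),\overline{u}(t)>0$ throughout. Thus only the bounds relative to $1$ and the ordering require work.

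The heart of the argument is the ordering, which I would establish first and independently of the bounds. Set $D:=\log\overline{u}-\log\underline{u}$, so that $D(0)>0$ (because $\underline{u}_0<1<\overline{u}_0$) and $\overline{u}-\underline{u}=\underline{u}(e^{D}-1)$ has the sign of $D$. Subtracting the two equations of (\ref{ODE}) gives
$$D'=(\overline{u}-\underline{u})\big(-2\gamma'(\underline{u})-\mu\big)+\gamma''(\underline{u})\,a(t).$$
By (\ref{H2}) we have $-\gamma'(\underline{u})\ge 0$ and $\gamma''(\underline{u})\ge 0$, while $a(t)\ge 0$; hence $-2\gamma'(\underline{u})-\mu\ge -\mu$ and the last term is nonnegative, so on any interval where $D>0$,
$$D'\ge -\mu\,(\overline{u}-\underline{u})=-\mu\,\underline{u}\,(e^{D}-1).$$
I would then run a first-contact argument: if $D$ had a first zero $T_1\in(0,T_0)$, then on the compact interval $[0,T_1]$ both $\underline{u}$ and $D$ are bounded, so $e^{D}-1\le L\,D$ there for a suitable constant $L$, turning the estimate into the linear inequality $D'\ge -C D$; Gronwall's lemma then yields $D(T_1)\ge D(0)\,e^{-CT_1}>0$, contradicting $D(T_1)=0$. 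Hence $\overline{u}>\underline{u}$ on all of $(0,T_0)$.

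Once the ordering is known, the cross terms acquire a definite sign and the bounds follow by comparison. Since $\overline{u}-\underline{u}>0$ and $-\gamma'(\underline{u})\ge 0$, the first equation of (\ref{ODE}) gives $\overline{u}'\ge \mu\,\overline{u}(1-\overline{u})$ and the second gives $\underline{u}'\le \mu\,\underline{u}(1-\underline{u})$. Comparing $\underline{u}$ with the solution $Z$ of the logistic equation $Z'=\mu Z(1-Z)$, $Z(0)=\underline{u}_0\in(0,1)$, and $\overline{u}$ with the solution $W$ of the same equation with $W(0)=\overline{u}_0>1$, and using the explicit formulas for $Z,W$, which satisfy $0<Z(t)<1<W(t)$ for every $t\ge 0$, the standard scalar comparison principle yields $\underline{u}\le Z<1$ and $\overline{u}\ge W>1$, which is exactly (\ref{t0}).

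I expect the main obstacle to be the degeneracy on the boundary of the rectangle $\{0<\underline{u}<1<\overline{u}\}$: at points where the forcing $\gamma''(\underline{u})a(t)$ or the factor $\gamma'(\underline{u})$ vanishes, the naive ``the vector field points inward on the boundary'' reasoning produces only non-strict inequalities at a first-contact time and cannot by itself prevent the trajectory from touching $\{\underline{u}=1\}$, $\{\overline{u}=1\}$, or the diagonal $\{\overline{u}=\underline{u}\}$. The device above is designed to bypass exactly this: the separation $\underline{u}<1<\overline{u}$ is obtained not from boundary velocities but from comparison with the genuinely strict logistic barriers $Z,W$ that never reach $1$, while the ordering $D>0$ is preserved by a linear Gronwall estimate that is insensitive to the vanishing of the forcing terms.
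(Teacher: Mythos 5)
Your proof is correct, but it takes a genuinely different route from the paper's. The paper argues by contradiction at a first contact time $t_0$ with the boundary of the rectangle $\{0<\underline{u}<1<\overline{u}\}$, splitting into four cases: touching $\{\underline{u}=1\}$ or $\{\overline{u}=1\}$ separately (excluded by the sign of the velocity at $t_0$), the corner $\underline{u}(t_0)=\overline{u}(t_0)=1$ (excluded by an integrating-factor estimate on $\log\overline{u}-\log\underline{u}$), and $\underline{u}(t_0)=0$ (excluded by backward uniqueness of the trivial solution). You instead decouple the ordering from the bounds: you first prove $\underline{u}<\overline{u}$ on all of $(0,T_0)$ by a Gronwall argument on $D=\log\overline{u}-\log\underline{u}$ --- essentially the computation the paper performs only in its corner case, promoted to a global statement --- and then derive the one-sided inequalities $\overline{u}'\geq\mu\overline{u}(1-\overline{u})$ and $\underline{u}'\leq\mu\underline{u}(1-\underline{u})$, closing with scalar comparison against explicit logistic barriers that stay strictly on either side of $1$. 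What your route buys is robustness precisely at the degeneracy you flag: assumption (\ref{H2}) permits $\gamma'(1)=0$ (e.g.\ $\gamma$ locally constant), in which case the paper's first two cases yield only $\underline{u}'(t_0)\leq 0$, resp.\ $\overline{u}'(t_0)\geq 0$, not the strict sign it invokes, whereas your barriers $Z<1<W$ exclude touching regardless; likewise, positivity is immediate for you since solutions of the logarithmic system (\ref{ODE}) are exponentials by construction, replacing the backward-uniqueness step. The paper's case analysis is shorter when $\gamma'<0$ strictly; your argument covers all $\gamma$ admissible under (\ref{H1})--(\ref{H4}) without extra care.
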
 
\begin{proof} We argue by contradiction and assume that
\begin{equation} \label{cont} \mbox{  \emph{there exists $t_0 \in (0,T_{0}) $, such that  (\ref{t0}) is   satisfied for any $t<t_0$}} \end{equation} 
 and    
 \begin{itemize} 
\item[(1)] $\underline{u}(t_0)=1$ and $ \overline{u}(t_0)>1;$
\item[(2)] $\underline{u}(t_0)<1$ and $ \overline{u}(t_0)=1;$
\item[(3)] $\underline{u}(t_0)=1$ and $ \overline{u}(t_0)=1;$
\newline 
or  either
\item[(4)]  $\underline{u}(t_0)=0$ and $ \overline{u}(t_0) \geq 1.$
\end{itemize} 
In case (1), we have that, by substituting in (\ref{ODE}),    $$ \underline{u}^{\prime}(t_0)<0$$ which contradicts   (\ref{cont}) because at $t=t_0$,  $\underline{u}$ gets its minimum of
$(0,t_0]$. 
In the same fashion we see that (2) is not possible. In case (3),   
we subtruct both equations to get 
$$\begin{array}{lcl} \frac{d}{dt} \left( \log( \overline{u}) -\log(\underline{u}) \right) &= & 2[- \gamma^{\prime } (\underline{u})](\overline{u} -\underline{u})+  \gamma^{\prime \prime} (\underline{u}) a(t)
  -  \mu (\overline{u}- \underline{u})
\\ [4mm] &= & [- 2\gamma^{\prime } (\underline{u}) - \mu](\overline{u} -\underline{u})+  \gamma^{\prime \prime} (\underline{u}) a(t) 
\\ [4mm] &= & [- 2\gamma^{\prime } (\underline{u}) - \mu] \xi ( \ln(\overline{u}) -\ln(\underline{u})) +  \gamma^{\prime \prime} (\overline{u}) a(t) 
\end{array} 
$$for some $\xi \in ( \underline{u}, \overline{u})$. After integration over $(0,t)$   we have that 

$$ \begin{array}{lcl} \displaystyle 
e^{-\int_{0}^t[- 2\gamma^{\prime } (\underline{u}) - \mu]\xi d\tau }  \left( \log( \overline{u}) -\log(\underline{u}) \right)
&  = & \displaystyle \log( \overline{u}_0) -\log(\underline{u}_0) 
\\ [2mm] & + & 
\displaystyle  \int_0^te^{-\int_{0}^{\tau}[- 2\gamma^{\prime } (\underline{u}) - \mu] \xi ds } \gamma^{\prime \prime} (\underline{u}) a(\tau) d \tau.
\end{array} 
$$
We take $t=t_0<T_{0}$ to obtain  
$$   \log( \overline{u}(t_0)) -\log(\underline{u}(t_o)) >0$$
which contradicts (3). To see  that (4) is not possible, we just notice that $\underline{u}=0$ in $[0,t_0]$ is the backward solution to (\ref{3.0}), thanks to uniqueness of solutions,  
we have that $\underline{u}(0)=0$ which contradicts that $\underline{u}_0>0$ and  the proof ends.
\end{proof}   
\begin{lemma} 
\label{l3.3} Let $T_0$ be given in Lemma \ref{3-1} and $T_+$ in Lemma \ref{l2.6}  then
$T_{0}=T_*.$ 
\end{lemma}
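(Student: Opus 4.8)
The plan is to argue by contradiction. Suppose $T_{0}<T_*$. Since $(0,T_{0})$ is the maximal interval of existence for the system (\ref{ODE}), whose right-hand side is smooth on the domain $\{\overline{u}>0,\ \underline{u}>0\}$ (recall $\gamma\in C^3([0,\infty))$), the standard escape alternative forces the trajectory $(\overline{u},\underline{u})$ to leave every compact subset of that domain as $t\to T_{0}^-$; concretely, either $\overline{u}$ blows up or $\underline{u}\to 0$. I would rule out both by producing time-uniform bounds $0<\delta\le\underline{u}$ and $\overline{u}\le M$ on $(0,T_{0})$ with $\delta,M$ independent of $T_{0}$.

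First I would record the facts already available on $(0,T_{0})$. Lemma \ref{l3.2} gives $0<\underline{u}<1<\overline{u}$, which already supplies the lower bound for $\overline{u}$ and the upper bound for $\underline{u}$. Moreover, by Lemma \ref{l2.5} together with the convergence $v_n\to v$ from Theorem \ref{theorem2.1}, the function $a(t)=\|\nabla v\|_{L^{\infty}(\Omega)}$ is bounded by a constant $c_a$ on $[0,T_*)$, hence on $[0,T_{0}]$.

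The core step is the upper bound for $\overline{u}$. By (\ref{H2}), $\gamma''\ge0$ makes $\gamma'$ nondecreasing, so $-\gamma'(\underline{u})\le-\gamma'(0)$, while $\gamma'''\le0$ makes $\gamma''$ nonincreasing, so $\gamma''(\underline{u})\le\gamma''(0)$. Evaluating (\ref{H3}) at $s=0$ gives $-2\gamma'(0)\le\mu_0<\mu$, hence $-\gamma'(0)\le\mu_0/2<\mu$. Inserting these into the first line of (\ref{ODE}) and using $0<\overline{u}-\underline{u}<\overline{u}$ yields
$$\frac{d}{dt}\log\overline{u}\le\Big(\tfrac{\mu_0}{2}-\mu\Big)\overline{u}+\big(\gamma''(0)c_a+\mu\big)=:\Big(\tfrac{\mu_0}{2}-\mu\Big)\overline{u}+C,$$
so that $\overline{u}'\le(\tfrac{\mu_0}{2}-\mu)\overline{u}^2+C\overline{u}$ with strictly negative quadratic coefficient. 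Comparison with the associated logistic equation bounds $\overline{u}\le M:=\max\{\overline{u}_0,\,C/(\mu-\mu_0/2)\}$ on $(0,T_{0})$. With $\overline{u}\le M$ in hand, the second line of (\ref{ODE}) gives the lower bound for $\underline{u}$: since $-\gamma'(\underline{u})\in[0,-\gamma'(0)]$, $\underline{u}-\overline{u}\in(-M,0)$ and $\mu(1-\underline{u})>0$, one obtains $\frac{d}{dt}\log\underline{u}\ge\gamma'(0)M$, a constant, whence $\underline{u}(t)\ge\underline{u}_0\,e^{\gamma'(0)MT_*}=:\delta>0$. Thus $(\overline{u},\underline{u})$ stays in a compact subset of $\{\overline{u},\underline{u}>0\}$ on $(0,T_{0})$, and since $a$ is continuous up to $T_*$ the solution can be continued past $T_{0}$, contradicting maximality; therefore $T_{0}=T_*$.

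I expect the main obstacle to be precisely this a priori upper bound for $\overline{u}$: it is here that the gap condition $\mu_0<\mu$ in (\ref{H3}) is indispensable, since it guarantees that the logistic damping $-\mu\overline{u}$ strictly dominates the aggregating term $-\gamma'(\underline{u})\overline{u}$ for large $\overline{u}$. Without this strict inequality the differential inequality for $\overline{u}$ would lose its negative quadratic leading coefficient and the supersolution could in principle blow up before reaching $T_*$; the monotonicity signs in (\ref{H2}) are what allow one to reduce the variable coefficients $-\gamma'(\underline{u})$ and $\gamma''(\underline{u})$ to the constants $-\gamma'(0)$ and $\gamma''(0)$ and thereby bring (\ref{H3}) to bear.
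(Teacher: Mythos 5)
Your proof is correct, but it runs along a genuinely different track than the paper's. The paper does not bound $\overline{u}$ by itself; it subtracts the two equations of (\ref{ODE}) and controls the \emph{ratio}: writing
$$\frac{d}{dt}\left[\log(\overline{u})-\log(\underline{u})\right]=\left(-2\gamma^{\prime}(\underline{u})-\mu\right)(\overline{u}-\underline{u})+\gamma^{\prime\prime}(\underline{u})\,a(t),$$
it invokes (\ref{H3}) \emph{along the trajectory} at $s=\underline{u}$ (using $\gamma^{\prime\prime}(s)s\geq 0$, so $-2\gamma^{\prime}(\underline{u})\leq\mu_0<\mu$) together with $\overline{u}>\underline{u}$ from Lemma \ref{l3.2} to drop the first term entirely, leaving $\frac{d}{dt}[\log(\overline{u})-\log(\underline{u})]\leq\gamma^{\prime\prime}(0)a(t)$; integrating and then using the sandwich $0<\underline{u}<1<\overline{u}$ converts the resulting bound on $\overline{u}/\underline{u}$ simultaneously into the upper bound on $\overline{u}$ and the lower bound on $\underline{u}$. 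You instead bound $\overline{u}$ directly through a logistic differential inequality with strictly negative quadratic coefficient $\tfrac{\mu_0}{2}-\mu$, obtained from (\ref{H3}) only at $s=0$ plus the monotonicities $\gamma^{\prime}$ nondecreasing and $\gamma^{\prime\prime}$ nonincreasing from (\ref{H2}), and then feed $M$ into the $\underline{u}$-equation. Each approach has its merits: the paper's ratio computation is the natural device there because the identical subtraction is reused in Lemma \ref{l5.2} (with $a(t)\leq c_{\Omega}|\overline{u}-\underline{u}|$) to prove convergence, while your version isolates exactly where the gap $\mu_0<\mu$ is needed and makes explicit the continuation alternative that the paper leaves implicit (the paper never spells out why the a priori bounds permit extension to $T_*$). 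One small inaccuracy worth fixing: Lemma \ref{l2.5} does not give a bound on $a(t)=\|\nabla v\|_{L^{\infty}(\Omega)}$ that is uniform on all of $[0,T_*)$ — the constants there come from the comparison ODE (\ref{ode1}), whose solution blows up as $t\to T_p$, so the bound may degrade near $T_*$; this is harmless for you, since in the contradiction setup $T_0<T_*$ you only need $a$ bounded on the compact interval $[0,T_0]$, which continuity of $a$ on $[0,T_*)$ (Lemma \ref{l3.1}) supplies, but the claim ``bounded on $[0,T_*)$'' should be weakened accordingly.
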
 
\begin{proof} 
We subtract in (\ref{ODE}) the second equation to the first to obtain  
$$ \frac{d}{dt} \left[  \log( \overline{u})-  \log(\underline{u}) \right]  = (2[- \gamma^{\prime }] - \mu \underline{u}) (\overline{u} -\underline{u})+ 
 \gamma^{\prime \prime} (\underline{u}) a(t). $$
Lemma \ref{l3.2} and assumption (\ref{H3}) shows 
$$ \frac{d}{dt} \left[  \log( \overline{u})-  \log(\underline{u}) \right]   \leq  
 \gamma^{\prime \prime} (\underline{u}) a(t). $$ 
 In view of $\underline{u} \in (0,1) $, $\gamma^{\prime \prime }(\underline{u}) \leq \gamma^{\prime \prime} (0) <\infty$ we get, after integration in the previous equation  
  the upper boundedness of $   \overline{u}$  and the lower boundedness of $\underline{u}$ for any $t<T_*$. 

\end{proof}

     \section{Comparison principle ODEs system / PDEs system    } \label{s4} \setcounter{equation}{0}
     In this section we compare the solution of system (\ref{1.1}) to $\underline{u}$,  $\overline{u}$,  the solution to system  (\ref{ode1}). We notice that to prove that $u$ and $v$ are  bounded by $\overline{u}$ is equivalent to obtain  the  non-positivity   of the functions $u- \underline{u}$ and $v- \overline{v}$. In the same way we have to see that $u- \underline{u}$ and $v- \underline{v}$ are non-negative functions. To obtain such result we   introduce the following the functions 
     $$\overline{U}= u- \overline{u}, \qquad \underline{U}= u- \underline{u},$$
      $$\overline{V}= v- \overline{v}, \qquad \underline{V}= v- \underline{v}.$$
     \begin{lemma} \label{l4.1} For any $t<T_{*}$ we have that  
     $$\overline{U}\leq 0, \qquad \underline{U}\geq 0. $$
     \end{lemma}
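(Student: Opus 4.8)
The goal is to show $\overline{U} = u - \overline{u} \leq 0$ and $\underline{U} = u - \underline{u} \geq 0$ on $(0, T_*)$, i.e.\ the ODE solutions $\overline{u}, \underline{u}$ from \eqref{ODE} genuinely sandwich the PDE solution $u$. The plan is to derive the partial differential equation satisfied by $u$ in the form obtained at the start of Section~\ref{s3}, namely
$$ u_t - \gamma(v)\Delta u = 2\gamma'(v)\nabla v\nabla u - u\gamma'(v)(u-v) + u\gamma''(v)|\nabla v|^2 + \mu u(1-u), $$
and compare it term by term with the ODE governing $\overline{u}$. The key is that $\overline{u}$ has no diffusion, so $\overline{u}_t - \gamma(v)\Delta\overline{u} = \overline{u}_t$, and one subtracts the two relations to obtain an evolution inequality for $\overline{U}$.

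\textbf{Main steps.} First I would write down the equation for $\overline{U} = u - \overline{u}$ by subtracting the $\overline{u}$-equation in \eqref{3.0} from the PDE for $u$. The resulting equation will have a diffusion term $\gamma(v)\Delta\overline{U}$, a transport term $2\gamma'(v)\nabla v\cdot\nabla u$ (which is harmless because it vanishes at an interior positive maximum of $\overline{U}$), and a collection of reaction terms. The crucial bookkeeping is to rewrite the nonlinear reaction difference so that it factors through $\overline{U}$ itself: terms like $\mu u(1-u) - \mu\overline{u}(1-\overline{u}) = \mu(1 - u - \overline{u})\overline{U}$ and the chemotactic terms $-u\gamma'(v)(u-v) + \overline{u}\gamma'(\underline{u})(\overline{u}-\underline{u})$ must be reorganized, using the mean value theorem on $\gamma'$ and $\gamma''$ together with the monotonicity hypotheses \eqref{H2} (so $\gamma' \le 0$, $\gamma'' \ge 0$) and the sign control in \eqref{H3}, into a form bounded above by $c(t)\,\overline{U}_+$ plus terms that are nonpositive when $\overline{U} \ge 0$. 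Here one exploits that $v$ is itself controlled: since $v$ solves $-\Delta v + v = u$, the constant $c_\Omega$ of Definition~\ref{dc} gives $a(t) = \|\nabla v\|_{L^\infty} \le c_\Omega\|u\|_{L^\infty}$, which is how $a(t)$ enters the $\overline{u}$-equation and matches the $|\nabla v|^2$ term.

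\textbf{The comparison argument itself.} Having reduced the problem to $\overline{U}_t - \gamma(v)\Delta\overline{U} - 2\gamma'(v)\nabla v\cdot\nabla\overline{U} \le c(t)\,\overline{U}$ with $\overline{U}(0) = u_0 - \overline{u}_0 \le 0$ by \eqref{H6}, I would test against $\overline{U}_+ = \max\{\overline{U}, 0\}$, integrate over $\Omega$, and use the Neumann boundary conditions to kill the boundary integral. The diffusion term yields $-\int_\Omega \gamma(v)|\nabla\overline{U}_+|^2 \le 0$, and the transport term is absorbed via Young's inequality together with hypothesis \eqref{H4}, $|\gamma'|^2/\gamma \le c_\gamma$, exactly as in the proof of Lemma~\ref{l2.2}. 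This produces $\frac{d}{dt}\frac12\int_\Omega \overline{U}_+^2\,dx \le c\int_\Omega \overline{U}_+^2\,dx$, and since $\overline{U}_+(0) = 0$, Gronwall's lemma forces $\overline{U}_+ \equiv 0$, i.e.\ $\overline{U} \le 0$. The estimate for $\underline{U} \ge 0$ is entirely symmetric: one tests the equation for $\underline{U} = u - \underline{u}$ against $-(-\underline{U})_+ = -\underline{U}_-$ and runs the identical Gronwall argument.

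\textbf{Anticipated obstacle.} The delicate point is the algebraic rearrangement of the reaction terms so that the leftover pieces have a definite sign when $\overline{U} \ge 0$ (resp.\ $\underline{U} \le 0$). The terms coupling $\gamma'(v)$ versus $\gamma'(\underline{u})$ and the $\gamma''$ terms do not cancel cleanly; one must carefully insert and subtract mixed terms, then invoke the concavity/convexity signs in \eqref{H2} and the barrier inequality \eqref{H3}, $-2\gamma'(s) + \gamma''(s)s \le \mu_0 < \mu$, to guarantee that the residual coefficient is bounded and that the genuinely nonlinear excess is nonpositive on the relevant sign region. Controlling the $u\gamma''(v)|\nabla v|^2$ term against $\overline{u}\gamma''(\underline{u})a(t)$ is where the structure of the auxiliary ODE system \eqref{3.0} was precisely designed to match the PDE, so the verification is a matter of confirming that each ODE coefficient dominates the corresponding PDE term on the comparison region $\underline{u} < 1 < \overline{u}$ established in Lemma~\ref{l3.2}.
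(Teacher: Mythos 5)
Your overall framework---rewrite the PDE in nondivergence form, subtract the ODE, test with $\overline{U}_+$ resp.\ $-[-\underline{U}]_+$, absorb the transport term via Young's inequality and (\ref{H4}), and conclude by Gronwall from the zero initial data guaranteed by (\ref{H6})---is the paper's. But there is a genuine gap exactly at the point you flag as the ``delicate point'': the reaction terms do \emph{not} reduce to the self-contained inequality $\overline{U}_t - \gamma(v)\Delta\overline{U} - 2\gamma'(v)\nabla v\cdot\nabla\overline{U} \le c(t)\,\overline{U}$, and the two sign claims are not ``entirely symmetric'' independent problems. After inserting and subtracting the mixed terms and applying the mean value theorem, the residual piece in the $\overline{U}$-equation has the form $\overline{u}\left\{-\gamma'(v)+\gamma''(\xi_2)-\gamma'''(\xi_1)a(t)\right\}\underline{V}$ with $\underline{V}=v-\underline{v}$: the bracket is nonnegative by (\ref{H2}), but $\underline{V}$ has no definite sign on the set $\{\overline{U}>0\}$, so your assertion that ``the genuinely nonlinear excess is nonpositive on the relevant sign region'' fails. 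Testing with $\overline{U}_+$ therefore leaves the term $k\int_\Omega[(-\underline{V})_+]^2\,dx$ on the right-hand side of (\ref{lg1})---a quantity measuring the \emph{lower} deviation, which cannot be bounded by $\overline{U}_+$ alone; symmetrically, the $\underline{U}$-inequality (\ref{lg2}) produces $\int_\Omega \overline{V}_+^2\,dx$ and $\int_\Omega[-\underline{V}]_+^2\,dx$. A decoupled Gronwall for $\overline{U}_+$ alone does not close.

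The missing ingredient is the elliptic coupling, which is the substance of the rectangle method: testing $-\Delta\underline{V}+\underline{V}=\underline{U}$ (equation (\ref{V})) with $-[-\underline{V}]_+$ yields $\int_\Omega[-\underline{V}]_+^2\,dx \le \int_\Omega[-\underline{U}]_+^2\,dx$, and likewise $\int_\Omega \overline{V}_+^2\,dx \le \int_\Omega \overline{U}_+^2\,dx$ ((\ref{gl3})--(\ref{lg4})). Substituting these into (\ref{lg1})--(\ref{lg2}) and applying Gronwall to the \emph{combined} functional $\int_\Omega \overline{U}_+^2\,dx + \int_\Omega[-\underline{U}]_+^2\,dx$ is what forces both deviations to vanish simultaneously; upper and lower bounds must be proved at once, not one after the other. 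Two minor further points: your maximum-principle remark about the transport term is never used (the energy method handles it, as you also say); and to control $\int_\Omega u\,\overline{U}_+^2\,dx$ the paper invokes Gagliardo--Nirenberg with the $L^p$ bound of Lemma \ref{l2.4}, though your implicit appeal to the $L^\infty$ bound of Lemma \ref{l2.6} would also serve there.
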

     \begin{proof}
     We first consider the differential equations satisfied by $\overline{U}$ and  $\underline{U}$. Since  $u$ fulfills 
          $$u_t -div(\gamma(v) \nabla u )=\gamma^{\prime} (v) \nabla v \nabla u +  u \gamma^{\prime \prime}(v)|\nabla v|^2 + u [-\gamma^{\prime}(v)](u-v)  + \mu u(1-u), 
          $$ we have that  $\overline{U}$ satisfies 
     $$\begin{array}{l} \overline{U}_t -div(\gamma(v) \nabla \overline{U} )  =   \gamma^{\prime} (v) \nabla v \nabla \overline{U} + 
    ( \overline{U}+ \overline{u}) \gamma^{\prime \prime}(v)|\nabla v|^2 - \overline{u}\gamma^{\prime \prime}( \underline{u}) a(t)     \\ [4mm]  
    + (\overline{U}+ \overline{u})  [-\gamma^{\prime}(v)](u-v) - \overline{u} [-\gamma^{\prime } (\underline{u})] (\overline{u} -\underline{u})+  \mu u(1-u)-  \mu\overline{u}(1-\overline{u})
    \\ [4mm] 
    =   \gamma^{\prime} (v) \nabla v \nabla \overline{U} + 
     \overline{U}\gamma^{\prime \prime}(v)|\nabla v|^2 + \overline{u}[  \gamma^{\prime \prime}(v)|\nabla v|^2- \gamma^{\prime \prime}( \underline{u})  a(t) ]    \\ [4mm]  
    + \overline{U}  [-\gamma^{\prime}(v)](u-v) +  \overline{u} \left\{  [-\gamma^{\prime}(v)](u-v)-   [-\gamma^{\prime } (\underline{u})] (\overline{u} -\underline{u}) \right\}+ 
     \mu \overline{U} -  \mu \overline{U}  (u+\overline{u})   
     \\ [4mm] 
    =   \gamma^{\prime} (v) \nabla v \nabla \overline{U} + 
     \overline{U}\gamma^{\prime \prime}(v)|\nabla v|^2 + \overline{u}[  \gamma^{\prime \prime}(v)(|\nabla v|^2-a(t))- [ \gamma^{\prime \prime}( \underline{u}) - \gamma^{\prime \prime} (v)]  a(t) ]    \\ [4mm]  
    + \overline{U}  [-\gamma^{\prime}(v)](u-v) +  \overline{u} \left\{  [-\gamma^{\prime}(v)](u-v)-   [-\gamma^{\prime } (\underline{u})] (\overline{u} -\underline{u}) \right\}+ 
     \mu \overline{U} -  \mu \overline{U}  (u+\overline{u})   .
\end{array}
$$
In view of definition of $a(t)$, 
we have that $$|\nabla v|^2-a(t)  \leq 0 \quad a.e.$$
%
 and  since 
 $$\overline{u}   [-\gamma^{\prime}(v)](u-v)= \overline{u}   [-\gamma^{\prime}(v)](\overline{U} - \underline{V})+\overline{u}   [-\gamma^{\prime}(v)](\overline{u}-\underline{u})
$$
it results 
  $$\begin{array}{lcl} \displaystyle  \overline{U}_t -div(\gamma(v) \nabla \overline{U} ) & \leq & \displaystyle  
    \gamma^{\prime} (v) \nabla v \nabla \overline{U} + 
     \overline{U}\gamma^{\prime \prime}(v)|\nabla v|^2
      - \overline{u}[  \gamma^{\prime \prime}( \underline{u}) - \gamma^{\prime \prime} (v)]  a(t)     \\ [4mm]  &
    + & \displaystyle  \overline{U}  [-\gamma^{\prime}(v)](u-v)  
    \\ [4mm]  & + & \displaystyle   \overline{u} \left\{ [-\gamma^{\prime}(v)](\overline{U} - \underline{V}) 
    ([-\gamma^{\prime}(v)]- [-\gamma^{\prime } (\underline{u})] )(\overline{u}-\underline{u})\right\}    \\ [4mm]  & + & \displaystyle    \mu \overline{U} -  \mu \overline{U}  (u+\overline{u}) .  
     \end{array}
     $$
     Since 
     $$  \gamma^{\prime \prime}( \underline{u}) - \gamma^{\prime \prime} (v) = -\gamma^{\prime \prime \prime}(\xi_1) \underline{V},$$
     $$ \gamma^{\prime  }( \underline{u}) - \gamma^{\prime } (v) = -\gamma^{\prime  \prime}(\xi_2) \underline{V}$$
     and 
     $$ 0 \leq   \overline{u}- \underline{u}       $$
  we have that 
      $$\begin{array}{l} \overline{U}_t -div(\gamma(v) \nabla \overline{U} )  \leq 
    \gamma^{\prime} (v) \nabla v \nabla \overline{U} + 
     \overline{U}\gamma^{\prime \prime}(v)|\nabla v|^2 - \overline{u} [- \gamma^{\prime \prime \prime}( \xi_1) ]   a(t) \underline{V}    \\ [4mm]  
    + \overline{U}  [-\gamma^{\prime}(v)](u-v) +  \overline{u} \left\{ [-\gamma^{\prime}(v)](\overline{U} - \underline{V})\right\}
    - \overline{u}\gamma^{\prime  \prime}(\xi_2) \underline{V}( \overline{u}- \underline{u}) +
     \mu \overline{U} -  \mu \overline{U}  (u+\overline{u})  . 
     \end{array}
     $$
We now multiply the previous equation by $\overline{U}_+$ and integrate by parts over $\Omega$ to obtain after some computations 
 $$ \begin{array}{ll} \displaystyle 
 \frac{d}{dt} \frac{1}{2} \int_{\Omega} | \overline{U}_+ |^2 dx+ \epsilon \int_{\Omega} | \nabla \overline{U}_+ |^2 dx & \displaystyle  \leq  k  \int_{\Omega} (1+u)| \overline{U}_+ |^2 dx
 \\ 
 [2mm] & \displaystyle -\int_{\Omega} \overline{u} \left\{ -\gamma^{\prime}(v)+ \gamma^{\prime \prime} (\xi_2)  - \gamma^{\prime \prime \prime}( \xi_1) a(t)\right\}  \overline{U}_+ \underline{V} dx \end{array}
 $$
 thanks to assumption (\ref{H2}) we have that 
 $$ \overline{u} \left\{ -\gamma^{\prime}(v)+ \gamma^{\prime \prime} (\xi_2)  - \gamma^{\prime \prime \prime}( \xi_1) a(t)\right\}  \geq 0$$
 therefore
$$  -\int_{\Omega} \overline{u} \left\{ -\gamma^{\prime}(v)+ \gamma^{\prime \prime} (\xi_2)  - \gamma^{\prime \prime \prime}( \xi_1) a(t)\right\}    \overline{U}_+ \underline{V} dx 
\leq   \int_{\Omega} | \overline{U}_+ |^2dx +k(t)  \int_{\Omega} | (-\underline{V})_+ |^2 dx.
$$
We notice that 
$$ \int_{\Omega} u| \overline{U}_+ |^2 dx\leq  \|u\|_{L^p(\Omega)}  \|    \overline{U}_+ \|^{2}_{L^{2p^{\prime}}(\Omega)}  $$
for some  $p>N$ we have that $2p^{\prime} = \frac{2p}{p-1} <\frac{2N}{N-2}$ therefore thanks to Gagliardo Nirenberg inequality we have that 
$$  \|    \overline{U}_+ \|^{2}_{L^{2p^{\prime}}(\Omega)} \leq c \|  \overline{U}_+\|_{H^1(\Omega)}^{2a}  \|  \overline{U}_+\|_{L^2(\Omega)}^{2(1-a)}+
c \|  \overline{U}_+\|_{L^2(\Omega)}^{2}
$$ for $a$ satisfying 
$$ \frac{1}{2p^{\prime} }= \left( \frac{1}{2}- \frac{1}{N} \right)a+ \frac{(1-a)}{2}$$
  i.e. 
  $$ a=  \frac{N (p^{\prime} -1)}{2p^{\prime} } = \frac{N  }{2p  } <\frac{1}{2}. $$
 Thanks to  Young inequality and Lemma \ref{l2.4} we 
get 
\begin{equation} 
 \frac{d}{dt} \frac{1}{2} \int_{\Omega} | \overline{U}_+ |^2 dx \leq  k  \int_{\Omega} | \overline{U}_+ |^2 dx+k  \int_{\Omega} | (-\underline{V})_+ |^2 dx
 \label{lg1}
 \end{equation}
 In the same fashion, in view of $u \gamma^{\prime \prime}(v) |\nabla v|^2  \geq 0$, the following inequality is satisfied by $\underline{U} $  
    $$\begin{array}{lcl} \underline{U}_t -div(\gamma(v) \nabla \underline{U} )  & \geq & 
    \gamma^{\prime} (v) \nabla v \nabla \underline{U} 
    + \underline{U}  [-\gamma^{\prime}(v)](u-v) +  \underline{u} \left\{ [-\gamma^{\prime}(v)](u-v)\right\}
     \\ [4mm] & &  -  \underline{u} [-\gamma^{\prime}(\underline{u})](\underline{u}-\overline{u})+
     \mu \underline{U} -  \mu \underline{U}  (u+\underline{u})  .
     \end{array}
     $$
 Since 
 $$ \underline{u} \left\{ [-\gamma^{\prime}(v)](u-v)\right\}= \underline{u} \left\{ [-\gamma^{\prime}(v)](\underline{U} -\overline{V})\right\} 
 + \underline{u} \left\{ [-\gamma^{\prime}(v)](\underline{u} -\overline{u})\right\} 
 $$ we have that 
 $$\begin{array}{lcl} \underline{u} \left\{ [-\gamma^{\prime}(v)](u-v)\right\} & - &   \underline{u} [-\gamma^{\prime}(\underline{u})](\underline{u}-\overline{u})
 \\ [4mm] 
 & = & \underline{u} \left\{ [-\gamma^{\prime}(v)](\underline{U} -\overline{V})\right\} 
 - \underline{u} (\underline{u} -\overline{u}) \left\{ [ \gamma^{\prime}(v)- \gamma^{\prime}(\underline{u}) ]\right\} 
   \\ [4mm] 
 & = &  \underline{u} \left\{ [-\gamma^{\prime}(v)](\underline{U} -\overline{V})\right\} 
- \underline{u} (\underline{u} -\overline{u})   \left[ \gamma^{\prime \prime}(\xi_3)  \underline{V}  \right] 
 \end{array} 
 $$for some $\xi \in (\underline{u},v) $ if $\underline{u}<v$ and $\xi \in [v, \underline{u}]$ otherwise. Then, it results
    $$\begin{array}{lcl} \underline{U}_t -div(\gamma(v) \nabla \underline{U} )  & \geq & 
    \gamma^{\prime} (v) \nabla v \nabla \underline{U} 
    + \underline{U}  [-\gamma^{\prime}(v)](u-v) +  \underline{u} \left\{ [-\gamma^{\prime}(v)](\underline{U} -\overline{V})\right\} 
     \\ [4mm] & &  - \underline{u} (\underline{u} -\overline{u})   \left[ \gamma^{\prime \prime}(\xi_3)  \underline{V}  \right] +
     \mu \underline{U} -  \mu \underline{U}  (u+\underline{u})  .
     \end{array}
     $$
 We  take 
 $-[-\underline{U}]_+$ as test function in the weak formulation to get, after   some computations 
   $$
 \frac{d}{dt} \frac{1}{2} \int_{\Omega} [- \underline{U}]_+ ^2 dx \leq  k  \int_{\Omega}   [- \underline{U}]_+ ^2 dx
 +\int_{\Omega}  \underline{u}  [-\gamma^{\prime}(v)] [-\underline{U}]_+ \overline{V}  dx 
 $$ $$+
 \int_{\Omega}  \underline{u} (\underline{u} -\overline{u})   \gamma^{\prime \prime}(\xi_3)  \underline{V} [- \underline{U}]_+
dx $$
where the term $ \int_{\Omega}  u [- \underline{U}]_+ ^2dx$ is treated as before, using Gagliardo Nirenberg inequality. 
Since 
$$ \int_{\Omega}  \underline{u}  [-\gamma^{\prime}(v)] [-\underline{U}]_+ \overline{V}  dx
\leq \int_{\Omega}  \underline{u}  [-\gamma^{\prime}(v)] [-\underline{U}]_+ \overline{V}_+  dx
 \leq c \int_{\Omega}   [- \underline{U}]_+ ^2 dx+c \int_{\Omega}   \overline{V}_+ ^2 dx
 $$
 and 
 $$   \begin{array}{lcl} \displaystyle   \int_{\Omega}  \underline{u} (\underline{u} -\overline{u})   \gamma^{\prime \prime}(\xi_3)  \underline{V} [- \underline{U}]_+dx
& = & \displaystyle \int_{\Omega}  \underline{u} (\overline{u} -\underline{u})   \gamma^{\prime \prime}(\xi_3) [- \underline{V}]_+ [- \underline{U}]_+dx
\\ [4mm] 
&
\leq & \displaystyle  c \int_{\Omega}   [- \underline{U}]_+ ^2dx +c \int_{\Omega}   [- \underline{V}]_+ ^2 dx
\end{array} 
 $$
it results 
 \begin{equation} 
 \frac{d}{dt} \frac{1}{2} \int_{\Omega} [ -\underline{U}]_+^2 dx \leq  c  \int_{\Omega}   \underline{U}_+^2 dx+c \int_{\Omega}  \overline{V}_+ ^2 dx
 +c\int_{\Omega} [-\underline{V}]_+ ^2 dx. 
 \label{lg2}
 \end{equation}
 In the same way we have that 
\begin{equation} 
\label{V} 
 -\Delta  \underline{V} + \underline{V} = \underline{U} 
 \end{equation} 
 we multiply by $-[- \underline{V}]_+$ and integrate by parts to get, after some computations and thanks to Young Inequality 
\begin{equation}   \int_{\Omega} [-\underline{V}]_+ ^2 dx \leq  \int_{\Omega} [-\underline{U}]_+^2 dx. \label{gl3} \end{equation}
 In the same fashion we obtain 
 \begin{equation}   \int_{\Omega}  \overline{V}_+^2 dx \leq  \int_{\Omega} \overline{U}_+^2 dx. \label{lg4} \end{equation}
Thanks to (\ref{lg1})-(\ref{lg4}) we have 
$$
\frac{d}{dt} \left(   \int_{\Omega}  \overline{U}_+^2 dx + \int_{\Omega} [ -\underline{U}]_+ ^2 dx \right)
\leq c \left(   \int_{\Omega}  \overline{U}_+^2 dx + \int_{\Omega} [ -\underline{U}]_+^2 dx \right).$$
Gronwalls Lemma ends the proof. 
     \end{proof} 
     \section{Asymptotic behaviour}  \label{s5} 
     Thanks to Lemma \ref{l4.1} we have that 
     \begin{equation} 
     \underline{u} \leq u \leq \overline{u}. \end{equation}
     
   \begin{lemma} \label{l5.1} Let $(u,v)$ be the weak solution of (\ref{1.1}) and $(\underline{u}, \overline{u})$ the unique  solution  to (\ref{2.1}), then,  for any $t<T_{*}$ we have that 
   $$ \|\nabla v\|_{L^{\infty} (\Omega)  } \leq c_{\Omega} |\overline{u}- \underline{u}|.$$
   \end{lemma}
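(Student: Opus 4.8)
The plan is to exploit the fact that the sub- and super-solutions $\underline{u}$ and $\overline{u}$ solve the ODE system (\ref{ODE}) and hence, at each fixed time $t$, are \emph{constants} in the spatial variable $x$. This is precisely what allows me to convert the elliptic estimate into a bound by the \emph{difference} $\overline{u}-\underline{u}$ rather than by $\|u\|_{L^\infty(\Omega)}$ itself.

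First I would recall that $v$ solves $-\Delta v + v = u$ in $\Omega$ under the homogeneous Neumann boundary condition. Since $\underline{u}=\underline{u}(t)$ does not depend on $x$, the shifted function $\phi := v - \underline{u}$ satisfies the same type of elliptic problem, namely $-\Delta \phi + \phi = u - \underline{u}$, still with $\partial\phi/\partial\overrightarrow{n}=0$ on $\partial\Omega$, and most importantly $\nabla\phi=\nabla v$. This reduction is the heart of the argument.

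Next I would apply the definition of $c_{\Omega}$ given in Definition \ref{dc} to the right-hand side $f := u-\underline{u}$, which belongs to $L^\infty(\Omega)$ because $u$ is bounded for $t<T_*$ by Lemma \ref{l2.6}. This yields
$$\|\nabla v\|_{L^\infty(\Omega)} = \|\nabla\phi\|_{L^\infty(\Omega)} \leq c_{\Omega}\,\|u-\underline{u}\|_{L^\infty(\Omega)}.$$
Finally, the comparison $\underline{u}\leq u\leq\overline{u}$ furnished by Lemma \ref{l4.1} gives $0\leq u-\underline{u}\leq \overline{u}-\underline{u}$ pointwise in $x$, hence $\|u-\underline{u}\|_{L^\infty(\Omega)}\leq \overline{u}-\underline{u}=|\overline{u}-\underline{u}|$, and combining the two estimates proves the claim.

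There is essentially no analytic obstacle in this lemma; the only genuine content is the observation that subtracting the spatial constant $\underline{u}$ preserves $\nabla v$ while replacing the data $u$ by $u-\underline{u}$, so that the two-sided bound becomes effective. The finiteness of $c_{\Omega}$ is not an issue, being guaranteed by elliptic regularity and the Sobolev embedding $W^{2,p}(\Omega)\hookrightarrow W^{1,\infty}(\Omega)$ for $p>N$ recorded in the Remark following Definition \ref{dc}. One could even sharpen the constant to $\tfrac12 c_{\Omega}|\overline{u}-\underline{u}|$ by centering with the choice $c=\tfrac12(\overline{u}+\underline{u})$ in place of $c=\underline{u}$, but the stated bound is all that will be needed in the asymptotic analysis.
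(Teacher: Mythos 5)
Your proof is correct and is essentially the paper's own argument: the paper likewise subtracts the spatially constant function (writing $\underline{V}=v-\underline{v}$ with $-\Delta\underline{V}+\underline{V}=\underline{U}$ as in (\ref{V}), which is exactly your $\phi=v-\underline{u}$ since $\nabla\underline{V}=\nabla v$), applies the constant $c_{\Omega}$ of Definition \ref{dc} to get $\|\nabla v\|_{L^{\infty}(\Omega)}\leq c_{\Omega}\|\underline{U}\|_{L^{\infty}(\Omega)}$, and concludes via $\|\underline{U}\|_{L^{\infty}(\Omega)}\leq \overline{u}-\underline{u}$ from Lemma \ref{l4.1}. Your extra observations (boundedness of $u$ for $t<T_{*}$ justifying the use of $c_{\Omega}$, and the optional sharpening by centering at $\tfrac12(\overline{u}+\underline{u})$) are sound but not needed for the stated bound.
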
  
     \begin{proof}
  As a consequence of Lemma  \ref{l4.1} we have that  $u \in L^{\infty}(0,T:L^{\infty}(\Omega))$, for any  $T<T_*$. Then,     from equation (\ref{V})   we deduce 
  $$ \underline{V}  \in W^{2,q}(\Omega)$$  
 for any $t<T_*$ and  $q<\infty$.  
 Therefore, the  constant $c_{\Omega}$ defined in Definition \ref{dc} provided us 
$$\|\nabla \underline{V} \|_{L^{\infty} (\Omega)} \leq c_{\Omega} \|\underline{U}\|_{L^{\infty} (\Omega)} $$
in view of  Lemma \ref{l4.1} we have that 
$$ \|\underline{U}\|_{L^{\infty} (\Omega)} \leq \overline{u} - \underline{u}$$
 which ends the proof. 
         \end{proof} 
    
       \begin{lemma} \label{l5.2}  Let   $(\underline{u}, \overline{u})$ be the unique  solution  to (\ref{2.1}), then,   we have that $T_0= \infty$ and satisfies 
   $$  |\overline{u}- \underline{u}| \rightarrow 0 \qquad \mbox{ as $t \rightarrow \infty$}.$$
   \end{lemma}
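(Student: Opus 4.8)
The plan is to monitor the logarithmic gap
$$ w(t):=\log\overline{u}(t)-\log\underline{u}(t), $$
which is strictly positive by Lemma~\ref{l3.2} (since $0<\underline{u}<1<\overline{u}$), and to show that $w\to 0$. Once uniform bounds $0<m\le\underline{u}\le\overline{u}\le M<\infty$ are in hand, the mean value theorem gives $w=(\overline{u}-\underline{u})/\xi$ for some $\xi\in(\underline{u},\overline{u})$, hence $m\,w\le\overline{u}-\underline{u}\le M\,w$; thus $w\to 0$ is equivalent to the claimed $|\overline{u}-\underline{u}|\to 0$, and it is $w$ that I would estimate.

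Subtracting the two equations of (\ref{ODE}), exactly as in the proof of Lemma~\ref{l3.3}, gives
$$ w'=\bigl[-2\gamma^{\prime}(\underline{u})-\mu\bigr]\,(\overline{u}-\underline{u})+\gamma^{\prime\prime}(\underline{u})\,a(t). $$
The first step is to extract linear damping from assumption (\ref{H3}): since $\gamma^{\prime\prime}\ge 0$ and $\underline{u}\ge 0$, (\ref{H3}) yields $-2\gamma^{\prime}(\underline{u})\le\mu_0-\gamma^{\prime\prime}(\underline{u})\underline{u}\le\mu_0<\mu$, so the bracket is at most $\mu_0-\mu<0$ and
$$ w'\le(\mu_0-\mu)\,(\overline{u}-\underline{u})+\gamma^{\prime\prime}(\underline{u})\,a(t). $$
The second step is to absorb the cross term $\gamma^{\prime\prime}(\underline{u})\,a(t)$ into this damping. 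Here I would use the elliptic gradient estimate of Lemma~\ref{l5.1}, namely $a(t)\le c_{\Omega}\,(\overline{u}-\underline{u})$, together with $\gamma^{\prime\prime}(\underline{u})\le\gamma^{\prime\prime}(0)$ (a consequence of $\gamma^{\prime\prime\prime}\le 0$ in (\ref{H2}) and $\underline{u}\ge 0$), so that the entire right-hand side is controlled by a constant multiple of $\overline{u}-\underline{u}$, hence of $w$. Provided the margin $\mu-\mu_0$ dominates this chemotactic contribution, one obtains a differential inequality $w'\le-\kappa\,w$ for some $\kappa>0$, and Gronwall's lemma forces $w\to 0$ exponentially, whence $|\overline{u}-\underline{u}|\to 0$.

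It remains to justify $T_0=\infty$ and the uniformity of the bounds used above. By Lemma~\ref{l3.3} the maximal existence interval of the ODE system (\ref{3.0}) is $T_*$, so I would exclude $\overline{u}\to\infty$ and $\underline{u}\to 0$ on finite intervals and then propagate the sandwich $\underline{u}\le u\le\overline{u}$ into a uniform bound on $\|u\|_{L^{\infty}(\Omega)}$; by the continuation criterion of the local theory this upgrades $T_*$ to $+\infty$. Boundedness of $\overline{u}$ comes from its own equation in (\ref{3.0}), in which the quadratic logistic term $-\mu\overline{u}^2$ dominates the chemotactic contributions once (\ref{H3}) is invoked to ensure $-\gamma^{\prime}(\underline{u})<\mu$, while $\underline{u}$ stays away from $0$ because $w$ remains bounded.

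The step I expect to be the main obstacle is the second one above: dominating the chemotactic feedback $\gamma^{\prime\prime}(\underline{u})\,a(t)$ --- which couples the ODE gap back to the elliptic gradient of $v$ --- by the logistic damping $(\mu-\mu_0)(\overline{u}-\underline{u})$. This is exactly where the interplay of (\ref{H2})--(\ref{H4}), the constant $c_{\Omega}$, and the margin $\mu-\mu_0$ must be made quantitative, and where the estimate of Lemma~\ref{l5.1} has to be compatible with the desired decay rate.
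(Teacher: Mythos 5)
Your overall architecture coincides with the paper's: monitor $w=\log\overline{u}-\log\underline{u}$, subtract the two equations of (\ref{ODE}), use (\ref{H3}) for damping, Lemma \ref{l5.1} for $a(t)$, the mean value theorem to convert decay of $\overline{u}-\underline{u}$ into decay of $w$, and a continuation argument for $T_0=\infty$. But at precisely the step you flag as ``the main obstacle'' there is a genuine gap, and it cannot be closed along the route you propose. You bound the cross term by $\gamma^{\prime\prime}(\underline{u})\,a(t)\le\gamma^{\prime\prime}(0)\,c_{\Omega}(\overline{u}-\underline{u})$ and then need the smallness condition $\gamma^{\prime\prime}(0)\,c_{\Omega}<\mu-\mu_0$, which appears nowhere in (\ref{H1})--(\ref{H6}); as written, your argument proves the lemma only for a restricted class of $\gamma$ and domains. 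The paper's proof avoids any such margin condition because $a(t)$ enters the comparison as a bound for $|\nabla v|^2$, not $|\nabla v|$ (see the use of ``$|\nabla v|^2-a(t)\le 0$'' in the proof of Lemma \ref{l4.1}; the definition of $a(t)$ in Section \ref{s3} is misleading on this point, and your literal reading of it is exactly what leads to the dead end). Consequently Lemma \ref{l5.1} is applied in the \emph{quadratic} form $a(t)\le c_{\Omega}\,|\overline{u}-\underline{u}|^2$, so the cross term has the structure $\gamma^{\prime\prime}(\cdot)\cdot(\mbox{factor of size }s)\cdot(\overline{u}-\underline{u})$ and is paired with $-2\gamma^{\prime}(\underline{u})$ inside assumption (\ref{H3}) (using also that $\gamma^{\prime\prime}$ is non-increasing, by $\gamma^{\prime\prime\prime}\le 0$ in (\ref{H2})), yielding $w^{\prime}\le(\mu_0-\mu)(\overline{u}-\underline{u})$ with no smallness requirement on $c_{\Omega}$ or $\gamma^{\prime\prime}(0)$: hypothesis (\ref{H3}) is designed to absorb this term structurally, not by comparing a margin against a constant. (The paper's own intermediate inequality here is admittedly terse, but the mechanism --- quadratic gain plus the $\gamma^{\prime\prime}(s)s$ part of (\ref{H3}) --- is the essential idea your proposal is missing.)

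A secondary, organizational defect: you assume uniform bounds $0<m\le\underline{u}\le\overline{u}\le M$ ``in hand'' before running the mean value theorem, which is circular as stated. The paper derives them in the correct order: first $w^{\prime}\le(\mu_0-\mu)(\overline{u}-\underline{u})\le 0$ gives $w\le w(0)$, whence, using $\underline{u}<1<\overline{u}$ from Lemma \ref{l3.2}, the explicit bounds $\underline{u}\ge\underline{u}_0/\overline{u}_0$ and $\overline{u}\le\overline{u}_0/\underline{u}_0$; only then does the mean value theorem, with $\xi\ge\underline{u}\ge\underline{u}_0/\overline{u}_0$, upgrade the inequality to $w^{\prime}\le -(\mu-\mu_0)(\underline{u}_0/\overline{u}_0)\,w$, and Gronwall gives exponential decay of $w$, hence $|\overline{u}-\underline{u}|\to 0$ and $T_0=\infty$. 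Your discussion of continuation past $T_*$ via the sandwich $\underline{u}\le u\le\overline{u}$ is correct and in fact more explicit than the paper's; the unproved smallness condition in the decay estimate is the one substantive gap.
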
  
     \begin{proof}
    In view of Lemma \ref{l5.1}, we have that   $ c_{\Omega} |\underline{u}- \overline{u} |^2 $ is an upper bound of $a(t)$, therefore $\overline{u}$ and $\underline{u}$ 
    satisfy   
$$ \left\{ \begin{array}{ll}  \frac{d}{dt} \log( \overline{u}) \leq   [- \gamma^{\prime } (\underline{u})] (\overline{u} -\underline{u})+  \gamma^{\prime \prime} (\underline{u}) c_{\Omega} 
 | \overline{u} - \underline{u}|^2  +  \mu (1-\overline{u}), & t>0 
\\ 
\frac{d}{dt} \log(\underline{u})=   [- \gamma^{\prime } (\underline{u})] (\underline{u} -\overline{u})  +  \mu (1-\underline{u}), & t>0
\\ \overline{u}(0)= \overline{u}_0, \qquad 
\end{array} \right. 
$$
we subtract both equations to get 
$$\begin{array}{lcl} \frac{d}{dt} \left( \log( \overline{u}) -\log(\underline{u}) \right) & \leq &  2[- \gamma^{\prime } (\underline{u})](\overline{u} -\underline{u})+  \gamma^{\prime \prime}(\underline{u}) c_{\Omega}  | \overline{u} - \underline{u}|^2
  -  \mu (\overline{u}- \underline{u})
\\ [2mm] 
& = & [- 2\gamma^{\prime } (\underline{u}) - \mu](\overline{u} -\underline{u})+  \gamma^{\prime \prime} (\underline{u}) c_{\Omega}  | \overline{u} - \underline{u}|^2 
\\ [2mm] 
& \leq &  [- 2\gamma^{\prime } (\underline{u}) - \mu+ \gamma^{\prime \prime} (\overline{u}) c_{\Omega} \underline{u}](\overline{u} -\underline{u}).
\end{array} 
$$
Thanks to assumption (\ref{H3}) it results  
$$\frac{d}{dt} \left( \log( \overline{u}) -\log(\underline{u}) \right) \leq (\mu_0 -\mu)(\overline{u} -\underline{u}).$$
After integration over $(0,t)$, in view of non-negativity of the righthand side term,  it results  
$$   \log( \overline{u}) -\log(\underline{u})  \leq   \log( \overline{u}_0) -\log(\underline{u}_0) .$$
Thanks to assumption (\ref{H5}) we have  
$$\overline{u} \leq \frac{\overline{u}_0}{\underline{u}_0} \underline{u}$$ 
which implies 
\begin{equation} \label{5.1}  0<  \frac{\underline{u}_0}{\overline{u}_0} \leq  \underline{u}. \end{equation} 
We apply Mean Value Theorem to  get  the inequality 
 $$  (\mu_0 -\mu)(\overline{u} -\underline{u}) \leq  (\mu_0 -\mu) \xi ( \ln(\overline{u})- \ln (\underline{u}) )   $$
 where $\xi \in (\underline{u}, \overline{u}) $ i.e. 
  $$  (\mu_0 -\mu)(\overline{u} -\underline{u}) \leq  (\mu_0 -\mu)\frac{\overline{u}_0}{\underline{u}_0}   ( \ln(\overline{u})- \ln (\underline{u}) )   $$
  which implies 
  $$\frac{d}{dt} \left( \log( \overline{u}) -\log(\underline{u}) \right) \leq     (\mu_0 -\mu)\frac{\overline{u}_0}{\underline{u}_0}    ( \ln(\overline{u})- \ln (\underline{u}) )   .$$
  We solve the previous differential inequality  and  it results 
 $$  \log( \overline{u}) -\log(\underline{u}) \leq  [  \log( \overline{u}_0) -\log(\underline{u}_0)] e^{ (\mu_0 -\mu)\frac{\overline{u}_0}{\underline{u}_0} t}$$
 which implies, in view of (\ref{5.1}) that $\underline{u}$ and $\overline{u}$ are uniformly bounded in time and therefore $T_0= \infty$.  
 We take exponentials in the previous inequality   
 $$  \frac{\overline{u}}{\underline{u}}  \leq exp\{ e^{ (\mu_0 -\mu)\frac{\overline{u}_0}{\underline{u}_0} t}\}$$
 which proves 
  $$  \overline{u} - \underline{u}   \leq \underline{u} ( exp\{ c e^{ (\mu_0 -\mu)\frac{\overline{u}_0}{\underline{u}_0} t}\}-1)
  \leq  ( exp\{ c e^{ (\mu_0 -\mu)\frac{\overline{u}_0}{\underline{u}_0} t}\}-1)$$
  taking limits when $t \rightarrow \infty$ we obtain the wished result. 
     \end{proof} 
     
     {\bf End of the proof of theorem \ref{t1.1}}
     \newline 
    Theorem  \ref{theorem2.1} proves the local existence of solutions. Lemma \ref{l4.1} proves that the solution is bounded by the auxiliary  functions $\overline{u}$ and $\underline{u}$. 
    Lemma \ref{l5.2} gives the existence of global in time solutions as a consequence of the global existence of the upper and lower functions. The asymptotic behavior is obtained in view of 
    $$\|u-1\|_{L^{\infty}(\Omega)} \leq |\overline{u}-1| +    |\underline{u}-1|,$$ 
    $$\|v-1\|_{L^{\infty}(\Omega)} \leq |\overline{u}-1| +    |\underline{u}-1|$$ 
    Lemma \ref{l3.2} and Lemma \ref{l5.2}. .

     {\bf Acknowledgments.}
I want  to express  my deep  gratitude  to  Professor  Ildefonso D\'{\i}az
for his advises, support, comments   and for sharing his  knowledge 
during the last three decades.   Thank you very much Ildefonso, it is always a great pleasure  to learn from you.     
\bibliography{sn-bibliography}

 \end{document}